\pdfoutput=1
\documentclass[11pt,letterpaper]{article}

\usepackage[english]{babel}
\usepackage[T1]{fontenc}
\usepackage{stmaryrd}
\usepackage{algorithmic}
\usepackage{amssymb}
\usepackage{marginnote}
\usepackage[ruled,vlined,linesnumbered]{algorithm2e}
\usepackage{booktabs,tabularx,threeparttable}
\usepackage[normalem]{ulem}
\usepackage{tikz}
\usetikzlibrary{fit,backgrounds,positioning,shapes.geometric}
\usepackage{fullpage}
\usepackage{mathrsfs}
\usepackage{amsmath}
\usepackage{bbm}
\usepackage{graphicx}
\usepackage{stackengine}
\usepackage{amsthm}
\usepackage{subcaption}
\usepackage{authblk}
\usepackage{thmtools}
\usepackage{thm-restate}
\usepackage[disable]{todonotes}
\usepackage{fancyhdr,lastpage}
\usepackage{amsfonts}
\usepackage[dvipsnames]{xcolor}
\usepackage{enumitem}
\usepackage[colorlinks=true,allcolors=blue]{hyperref}
\usetikzlibrary{calc, intersections, positioning}

\makeatletter
\renewcommand{\maketitle}{%
\begin{center}
{\large\bfseries \@title\par}
\vspace{0.5em}
{\normalsize \@author\par}
\end{center}
}
\renewenvironment{abstract}{%
\par\normalfont\normalsize
\begin{center}\begin{minipage}{0.9\linewidth}\small\noindent\textup{Abstract. }\ignorespaces}{\end{minipage}\end{center}\par}

\def\@seccntformat#1{\csname the#1\endcsname.\ }

\renewcommand\section{\@startsection{section}{1}{\z@}%
{2.0ex plus .5ex minus .2ex}{1.0ex plus .2ex}%
{\normalfont\large\centering}}

\renewcommand\subsection{\@startsection{subsection}{2}{\z@}%
{1.75ex plus .5ex minus .2ex}{-1em}%
{\normalfont\normalsize\bfseries}}
\makeatother

\newtheorem{thm}{Theorem}[section]

\newtheorem{lm}[thm]{Lemma}

\newtheorem{prop}[thm]{Proposition}

\theoremstyle{remark}

\theoremstyle{plain}

\newcommand{\set}[1]{\left\{#1\right\}}

\newcommand{\abs}[1]{\left| #1 \right|}

\newcommand{\N}{\mathbb{N}}

\begin{document}

\title{IMPROVED FRACTAL WEYL BOUNDS MATCHING IMPROVED SPECTRAL GAPS FOR HYPERBOLIC SURFACES AND OPEN QUANTUM MAPS}
\author{TRAVIS CUNNINGHAM}
\date{}
\maketitle

\begin{abstract}
 We prove a new fractal Weyl upper bound for the high-energy distribution of resonances of convex co-compact hyperbolic surfaces which matches the improved spectral gap given by Fourier decay. More precisely, if $\delta$ is the dimension of the limit set for the surface, $\beta_{\mathrm{BD}}>\tfrac{1}{2}-\delta$ is the fractal uncertainty exponent of Bourgain–Dyatlov, and
\[
m(\nu,\delta):=\min \Bigl(4\bigl(\nu-\beta_{\mathrm{BD}}\bigr),\,2\bigl(\nu-(\tfrac{1}{2}-\delta)\bigr),\,\delta\Bigr),
\]
then the number of resonances in the box $[R,R+1]+i[-\nu,0]$ is $O \bigl(R^{m(\nu,\delta)+\varepsilon}\bigr)$ as $R\to\infty$. This improves upon the fractal Weyl bound of Dyatlov, which matches the Patterson–Sullivan spectral gap $\tfrac{1}{2}-\delta$. We also give a new resolvent estimate improving the ones given by Dyatlov-Zahl and Dyatlov. Analogous results are obtained for quantum open baker’s maps, improving an estimate of Dyatlov-Jin, where we also give an improved fractal Weyl bound matching a spectral gap given by additive energy estimates. We refine known methods for proving fractal Weyl bounds which reduce the problem to an estimate of a certain determinant function; however, we use a different determinant function which allows us to make sharper estimates by applying the method of proof of the fractal uncertainty principle in each setting.
\end{abstract}

\section{INTRODUCTION}

\subsection{Main results.} Let $M=\Gamma\backslash\mathbb{H}$ be an infinite-area, convex co-compact hyperbolic surface; that is, $M$ is the quotient of the hyperbolic space $\mathbb{H}$ by a geometrically finite Fuchsian group $\Gamma$, containing 
no parabolic elements. We refer to \cite{Bor16} for a broad introduction to the spectral theory for such objects, which serve as models for more complicated systems in mathematical physics, and also have applications to algebra and number theory (see \cite{BGS11}).

This paper gives an improved fractal Weyl upper bound on the number of resonances - i.e., on the number of poles of the meromorphic continuation of the resolvent, $R(\lambda):=\left(-\Delta_{M}-\frac{1}{4}-\lambda^{2}\right)^{-1}$ - which \textit{matches} an improved spectral gap given by Bourgain–Dyatlov, \cite{BD17}. By a spectral gap we mean a region of the form $\set{\operatorname{Im} \lambda \geq -\beta}$ for some $\beta > 0$, which has at most finitely many resonances.

To state the result, let $\delta$ denote the Hausdorff dimension of the limit set, $\Lambda_{\Gamma}$, defined as the set of limit points of all orbits, $\Gamma z$, for $z\in\mathbb{H}$. A classical result of Patterson and Sullivan \cite{Pat76,Sul79} states that there is a simple resonance at $\lambda=-i \left(\tfrac{1}{2}-\delta\right)$ and no other resonance in the region $\{\operatorname{Im}\lambda\ge -(\tfrac{1}{2}-\delta)\}$. Naud \cite{Nau05} improved this result by showing that for some $\beta>\tfrac{1}{2}-\delta$ which depends on the surface, there are at most finitely many resonances in $\{\operatorname{Im}\lambda\ge -\beta\}$. Using the powerful new tool of the fractal uncertainty principle (see \cite{Dya19a} for an introduction), Bourgain–Dyatlov \cite{BD17} further improved this spectral gap of Naud by showing that the result holds for some $\beta_{BD}>\tfrac{1}{2}-\delta$ depending only upon $\delta$ and not on the specific features of the surface.

Let $R,\nu>0$, and
\begin{equation*}
\mathcal{N}(R,\nu):=\#\{\lambda\ \text{resonance}:\ \operatorname{Re}\lambda\in[R,R+1],\ \operatorname{Im}\lambda\ge -\nu\}, \tag{1.1} \label{eq:101}
\end{equation*}
where we always count resonances with multiplicities. Then we shall prove the following improved fractal Weyl upper bound on the density of resonances:

\newpage 

\begin{thm} \label{thm:1.1}
Let $M$ be a convex co-compact hyperbolic surface with $\delta\in(0,1)$, and let $\beta_{BD}$ be the size of the spectral gap proved in \cite[Theorem~1]{BD17}. Then for each $\nu>0$, $\varepsilon>0$, there exists $C>0$ such that
\[
\begin{gathered}
\mathcal N(R,\nu)\leq C R^{m(\nu,\delta)+\varepsilon},\quad R\to\infty,\\
m(\nu,\delta):=\min\left(4\left(\nu-\beta_{BD}\right),\,2\left(\nu-\left(\tfrac{1}{2}-\delta\right)\right),\,\delta\right).
\end{gathered}
\]
\end{thm}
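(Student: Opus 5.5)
We follow the Guillopé–Lin–Zworski / Dyatlov approach: resonances are counted as zeros of a determinant built from the Bowen–Series transfer operator, but with a better determinant, and its growth is estimated using the fractal uncertainty principle.

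\textbf{Step 1: reduction to Jensen's formula.} Resonances $\lambda$ correspond to $s=\tfrac12-i\lambda$ for which $I-\mathcal L_{s}$ fails to be invertible. Here $\mathcal L_s$ is the transfer operator acting on holomorphic functions on a neighborhood of the limit set. We refine it, iterating it $N\sim \log R$ times and composing with a microlocal cutoff adapted to the $h=R^{-1}$ neighborhood of $\Lambda_\Gamma$. This replaces it by an operator $\mathcal L_{s,N}$ whose zeros of $\det(I-\mathcal L_{s,N}^{2})$ (or of a similar Fredholm determinant) contain all resonances in the box. We introduce the determinant $D(s)=\det\bigl(I-\mathcal L_{s,N}^{2}\bigr)$. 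Jensen's formula on a disk of radius $\sim 1$ centered at a point $s_0$ with $\operatorname{Re} s_0$ large (where $D$ is bounded below) then gives
\[
\mathcal N(R,\nu)\lesssim \sup_{\operatorname{Re}s\ge \tfrac12-\nu-\varepsilon}\log|D(s)| + O(1).
\]

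\textbf{Step 2: bound $\log|D|$.} We use $\log|\det(I-A)|\le \sum_j \log(1+\sigma_j(A))$ with singular values $\sigma_j$. The count of $\sigma_j$ exceeding $R^{-\varepsilon}$ is controlled by the Hilbert–Schmidt or trace-class norm of $\mathcal L_{s,N}$ restricted to $h$-scale pieces. Splitting the transfer operator into the $\sim R^{\delta}$ intervals of length $h$ covering $\Lambda_\Gamma$ gives the trivial bound $R^{\delta}$, which is the third term of $m$. For the other terms we estimate the Hilbert–Schmidt norm $\|\mathcal L_{s,N}\|_{HS}^2$. Each branch has derivative size $\sim R^{-1}$ and weight $|\gamma'|^{\operatorname{Re}s}$. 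Summing over $\sim R^{\delta}$ branches gives $R^{\delta}\cdot R^{-2\operatorname{Re}s}\cdot R^{1}$ up to $R^\varepsilon$, that is, $R^{2(\nu-(1/2-\delta))}$ when $\operatorname{Re}s=\tfrac12-\nu$. This is the middle term. Squaring $\mathcal L$ in the determinant is what turns the pressure exponent into the factor $2$; in Dyatlov's original argument it appeared with factor $1$ against a different normalization.

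\textbf{Step 3: the FUP term (main obstacle).} To get $4(\nu-\beta_{BD})$, we write $\mathcal L^2_{s,N}$ as a product of two operators of the form $\mathbf 1_{X(h)}\mathcal B_h\mathbf 1_{X(h)}$. Here $\mathcal B_h$ is the hyperbolic Fourier-type operator, and this factorization mirrors the Dyatlov–Zahl / Bourgain–Dyatlov reduction. The FUP bound $\|\mathbf 1_X\mathcal B_h\mathbf 1_X\|\lesssim h^{\beta_{BD}-\varepsilon}$ would then be inserted into the singular value count. We would bound $\sum_j\sigma_j^{2}$ by interpolating between the Hilbert–Schmidt bound of Step 2 and an operator norm bound $R^{\nu-\beta_{BD}}$ per factor. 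Each of the four factors of $\mathcal L_s$ (two per square, counting adjoints in $A^*A$) contributes $R^{\nu-\beta_{BD}}$. This yields at most $R^{4(\nu-\beta_{BD})+\varepsilon}$ singular values above $R^{-\varepsilon}$. The delicate point is making the FUP, which is an operator-norm statement, feed into a counting (Schatten) estimate uniformly in $\operatorname{Re}s$. We would try to run the Bourgain–Dyatlov induction-on-scales argument directly on Hilbert–Schmidt norms of the localized pieces rather than quoting the theorem.

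Taking the minimum of the three bounds and applying Jensen's formula gives $\mathcal N(R,\nu)\le CR^{m(\nu,\delta)+\varepsilon}$.
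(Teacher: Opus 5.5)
Your Step 3 has a genuine gap, and it is the only new part of the statement. The exponents $2(\nu-(\tfrac12-\delta))$ and $\delta$ are simply quoted from Dyatlov's Theorem 1 in the paper. Your own derivation of the middle one is off anyway: $R^{\delta}\cdot R^{-2\operatorname{Re}s}\cdot R=R^{\delta+2\nu}$ at $\operatorname{Re}s=\tfrac12-\nu$.

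A bound $R^{\nu-\beta_{BD}}$ on the operator norm of each factor controls only the largest singular value. Multiplying four such bounds does not bound the number of singular values above $R^{-\varepsilon}$. For $\nu>\beta_{BD}$, the only regime where the statement goes beyond the Bourgain--Dyatlov gap, it gives no count at all.

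If you honestly interpolate, the most that follows from $\|A\|\lesssim R^{\nu-\beta_{BD}}$ and $\|A\|_{HS}^2\lesssim R^{2(\nu-(1/2-\delta))}$ is
$\sum_j\sigma_j(A)^4\le\|A\|^2\|A\|_{HS}^2\lesssim R^{2(\nu-\beta_{BD})+2(\nu-(1/2-\delta))}$.
This is sharp given only those two inputs, and it is worse than Dyatlov's bound exactly when $\nu>\beta_{BD}$.

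Your choice of determinant makes this worse. $\det(I-\mathcal L^2)$ leads to $\log|D|\le\|\mathcal L^2\|_{\operatorname{tr}}\le\|\mathcal L\|_{HS}^2$, which is the $L^2$ norm of the kernel of $\mathcal L$. The modulus of that kernel does not see the oscillating phase. This is why a squared determinant reproduces $2(\nu-(\tfrac12-\delta))$ and nothing better.

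Step 1 is also unjustified. Once you insert microlocal cutoffs into iterated transfer operators, you need an exact identity tying the new determinant to resonances. The paper gets it from Vasy's method: $I=Z(\omega)P_h(\omega)+A(\omega)$ with $A=JA_-\tilde AA_++\varepsilon$. This gives $P_h(\omega)^{-1}=(I-A^4)^{-1}(I+A+A^2+A^3)Z$.

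The missing idea is to pass to the fourth power and estimate a Schatten-$4$ quantity directly from Fourier decay, not from an FUP norm bound. With $F(\omega)=\det(I-A(\omega)^4)$ one has
$\log|F|\le\|A^4\|_{\operatorname{tr}}\le\sum_j s_j(A)^4\lesssim\|J\|^4\operatorname{tr}((\tilde A^*\tilde A)^2)+O(h^\infty)$.
Here $\operatorname{tr}((\tilde A^*\tilde A)^2)=\|\tilde A^*\tilde A\|_{HS}^2$ is an explicit integral of the squared kernel of $\tilde A^*\tilde A$, and that kernel does contain the phase.

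Replace $\psi_-$ by $\sqrt{\tilde F}$, which is legitimate by the lower bound of Lemma~\ref{lm:2.2} on the smoothed Patterson--Sullivan density. The inner integral then becomes, up to an average over shifts $|s|\lesssim h^{\rho'}$, the integral $\int e^{i\frac{x-w}{h}\varphi_{x,w,s}(\eta)}g_{x,w,s}(\eta)\,d\mu(\eta)$. Theorem~\ref{thm:2.1} gives $|\tilde K_s|^2\lesssim|(x-w)/h|^{-\beta_F}$ for $|x-w|>h^{1/2}$. The region $|x-w|\le h^{1/2}$ costs only $h^{\delta/2}\le h^{\beta_F/2}$ by Ahlfors--David regularity.

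This yields $\operatorname{tr}((\tilde A^*\tilde A)^2)\lesssim h^{2-4\delta+\beta_F/2-O(\varepsilon_0)}=h^{4\beta_{BD}-O(\varepsilon_0)}$. Together with $\|J\|^4\lesssim h^{-4\nu-O(\varepsilon_0)}$, the growth exponent is exactly $4(\nu-\beta_{BD})$. Jensen's inequality then finishes, using the lower bound at $\omega_0=1+ih/3$, where $\|A(\omega_0)^4\|\le\tfrac12$.

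This is the $T^*T$ kernel computation behind Bourgain--Dyatlov's derivation of the FUP from Fourier decay, measured in Hilbert--Schmidt norm instead of operator norm. No induction on scales is involved.
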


This bound matches the spectral gap of \cite{BD17}, since the exponent $m(\nu,\delta)$ is negative for $\nu<\beta_{BD}$. Since $\beta_{BD}>\tfrac{1}{2}-\delta$ always holds, Theorem~\ref{thm:1.1} improves a similar estimate of \cite{Dya19b} which matches the Patterson–Sullivan gap, see Figure 1. We mention that \cite{BD18} gave a fractal uncertainty principle, and hence spectral gap, with $\beta>0$, even when $\tfrac{1}{2}-\delta<0$. This implies the bound $\mathcal N(R,\nu)=O(1)$ as $R\to\infty$ when $\nu<\beta$. Thus, although Theorem~\ref{thm:1.1} always beats the bound of \cite[Theorem~1]{Dya19b}, it may or may not give new information when $\tfrac{1}{2}-\delta<0$. However, Theorem~\ref{thm:1.1} always improves known estimates on resonance density when $\tfrac{1}{2}-\delta\ge 0$.

We also prove the following improved resolvent upper bound inside the Bourgain–Dyatlov spectral gap:
\begin{thm}\label{thm:1.2}
Assume that $\beta_{BD}>0$. Then for each $\nu\in(0,\beta_{BD})$, $\psi\in C_c^\infty(M)$, there exists $C_0>0$ such that for any $\varepsilon>0$,
\[
\|\psi R(\lambda)\psi\|_{L^{2}\to L^{2}} \leqslant C|\lambda|^{-1+c(\nu,\delta)+\varepsilon},\quad \operatorname{Re}\lambda \geqslant C_0,\ \operatorname{Im}\lambda \in [-\nu,1], \tag{1.2} \label{eq:102}
\]
where
\[
c(\nu,\delta)=\left(1+\frac{1-\delta-2\beta_{BD}}{1-\delta-2\nu}\right)\nu.
\]
\end{thm}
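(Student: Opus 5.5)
We would prove Theorem~\ref{thm:1.2} by interpolating between two known resolvent bounds with a Phragmén--Lindelöf (three-lines) argument in $\lambda$. Since $\nu<\beta_{BD}$, there are no resonances in the strip $\{\operatorname{Im}\lambda\ge -\nu\}$ for large $\operatorname{Re}\lambda$. The two inputs are as follows.

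On the line $\operatorname{Im}\lambda = 1$ (indeed for $\operatorname{Im}\lambda\ge \epsilon>0$) we have the standard bound $\|\psi R(\lambda)\psi\|\le C|\lambda|^{-1}$. This follows from the spectral theorem, since the operator is self-adjoint and $|\lambda^2 - \mu^2|\gtrsim |\lambda|$ there. The second input is an estimate near the line $\operatorname{Im}\lambda=-\beta$ with $\beta$ slightly less than $\beta_{BD}$. For this we will use the Dyatlov--Zahl type estimate, made quantitative via the fractal uncertainty principle of \cite{BD17}, in the form
\[
\|\psi R(\lambda)\psi\|\le C|\lambda|^{-1+2\beta+\varepsilon}\log|\lambda| .
\]
Here the exponent $2\beta$ comes from the Ehrenfest-time propagation of length $2\log|\lambda|$ (two times $\rho\log|\lambda|$ with $\rho\to 1$), each contributing a factor $e^{\beta t}$. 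We would first rederive this from the semiclassical parametrix construction, tracking how the loss depends on $\operatorname{Im}\lambda$.

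The key improvement over merely interpolating these two endpoints comes from a third input. Inside the Patterson--Sullivan region $\operatorname{Im}\lambda\ge -(\tfrac12-\delta)$, more precisely on the line $\operatorname{Im}\lambda=-\tfrac{1-\delta}{2}$... Rather than pursue that, we would instead rely on the form of $c(\nu,\delta)$. Note that $c(\nu,\delta)=\nu+\nu\frac{1-\delta-2\beta_{BD}}{1-\delta-2\nu}$. This vanishes to first order at $\nu=0$ with slope $1+\frac{1-\delta-2\beta_{BD}}{1-\delta}$, and equals $2\beta_{BD}$ at $\nu=\beta_{BD}$. We therefore plan a \emph{non-linear} interpolation. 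The propagation-time argument is run with time $T=\theta\log|\lambda|$, where the fractal uncertainty principle gives decay $|\lambda|^{-\theta(\beta_{BD}-\text{something})}$ with loss governed by $\delta$, and the resolvent on $\operatorname{Im}\lambda=-\nu$ picks up $e^{\nu T}$ from propagation. Concretely, we write $R(\lambda)$ through the identity $R = \text{(parametrix up to time }T) + e^{iT\lambda}\,(\cdot)\,R$. We would then bound the remainder via the fractal uncertainty estimate $\|\mathbf 1_{X}\mathcal B_h \mathbf 1_Y\|\le h^{\beta}$ applied to sets of porosity scale $h^{\theta}$. For scales $\theta<1$ this gives an exponent interpolating between $\tfrac12-\delta$ (trivial volume bound, scaling like $\theta(\tfrac{1-\delta}{2})$) and $\beta_{BD}$. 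Optimizing $\theta$ so that the remainder is absorbed (the remainder norm must be $<1$) yields $\theta = \frac{1-\delta-2\beta_{BD}}{1-\delta-2\nu}$-type ratios. The final norm is then $|\lambda|^{-1}e^{\nu(1+\theta)\log|\lambda|}$, which is exactly $|\lambda|^{-1+c(\nu,\delta)}$.

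The main obstacle is this absorption step. We must show that the fractal uncertainty bound at an intermediate scale $h^{\theta}$ beats the growth $e^{\nu T}$ uniformly for $\operatorname{Im}\lambda\in[-\nu,1]$. This requires a version of the fractal uncertainty principle of \cite{BD17} for neighborhoods of the limit set at scale $h^{\theta}$, with exponent linear in $\theta$, i.e.\ an interpolation of the FUP exponent between the volume bound and $\beta_{BD}$. We would prove it by applying \cite{BD17} after rescaling, combined with the trivial Lebesgue-measure bound $h^{\theta(1-\delta)/2}$, using Hölder/convexity in $\theta$. The remaining steps, namely the cutoffs $\psi$, the complex absorbing potential reduction, and the $\varepsilon$-losses from taking $\rho\to1$, are routine adaptations of the arguments in \cite{BD17} and \cite{Dya19b}.
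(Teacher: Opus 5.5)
Your eventual skeleton is the paper's. You use Dyatlov's approximate inverse $I=Z(\omega)P_h(\omega)+A(\omega)$, with one cutoff at scale $h^{\rho}$, $\rho\to 1$, and the other at scale $h^{\rho'}$, $\rho'=\theta$. Then $\|Z\|\lesssim h^{-1-(\rho+\rho')\nu}$, while the remainder carries only $\|J\|\lesssim h^{-\rho'\nu}$. Everything hinges on a bound for the mixed-scale operator $\tilde A$, whose cutoffs live on $\Lambda_\Gamma(C_1h^{\theta})$ and $\Lambda_\Gamma(C_1h)$, and this is exactly what your proposal does not supply.

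To land on $c(\nu,\delta)$ you need $\|\tilde A\|\lesssim h^{e(\theta)}$ with $e(\theta)=\theta\frac{1-\delta}{2}-\frac{\delta}{2}+\frac{\beta_F}{8}$. This is the mixed-scale volume exponent improved by the Fourier-decay gain $\beta_F/8$ \emph{uniformly in $\theta$}. Only then is the absorption condition $e(\theta)>\theta\nu$ equivalent to $\theta>\frac{1-\delta-2\beta_{BD}}{1-\delta-2\nu}$. Since $e(0)<0$, this is not an interpolation between a trivial bound and \cite{BD17}.

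``Applying \cite{BD17} after rescaling, combined with the trivial measure bound, using H\"older/convexity in $\theta$'' is not an argument. \cite{BD17} is a same-scale estimate, and there is no convexity principle in the scale parameter of the cutoff sets. The operators $\mathbf 1_{\Lambda_\Gamma(h^\theta)}\mathcal B_h\mathbf 1_{\Lambda_\Gamma(h)}$ do not form an analytic family to which three-lines or Riesz--Thorin applies, so you would have to invent such a principle. Your value of $\theta$ is also read off from the target formula rather than derived. For instance, a linear interpolation between exponent $0$ at $\theta=0$ and $\beta_{BD}$ at $\theta=1$ gives a different threshold, so the optimization step does not close. The abandoned Phragm\'en--Lindel\"of route cannot help either: interpolating $|\lambda|^{-1+2\beta}$ at $\operatorname{Im}\lambda=-\beta$ with the essentially $|\lambda|^{-1}$ bound near the real axis gives at best the exponent $2\nu$ of \cite{DZ16}.

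The idea you are missing is to pass to the fourth Schatten norm. The paper bounds $\|A\|^4\le\sum_j s_j(A)^4\lesssim\|J\|^4\operatorname{tr}\bigl((\tilde A^*\tilde A)^2\bigr)$. It then replaces the coarse cutoff $\psi_-$ by $\sqrt{\tilde F}$, where $\tilde F$ is the Patterson--Sullivan measure smoothed at scale $h^{\rho'}$; $\tilde F$ dominates $\psi_-$ by Lemma~\ref{lm:2.2}.

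With this replacement the trace can be computed explicitly. It becomes an integral of $|K|^2$ over $\Lambda_\Gamma(C_1h^{\rho})^2$, where $K$ is an oscillatory integral of $\tilde F$, i.e.\ of an average of translates of $\mu$, with frequency $(x-w)/h$. Theorem~\ref{thm:2.1} then gives $|K|^2\lesssim|(x-w)/h|^{-\beta_F}$ for $|x-w|>h^{1/2}$, and the near-diagonal part is handled by Ahlfors--David regularity. This bound is independent of the smoothing scale $h^{\rho'}$ (Lemma~\ref{lm:2.3}, Proposition~\ref{prop:3.2}), and it produces exactly $e(\rho')$ above. With the Hilbert--Schmidt norm (Dyatlov's $\det(I-A^2)$) the oscillation is invisible, and one recovers only $\nu(1-2\nu)/(1-\delta-2\nu)$.
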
 

We note that $\beta_{BD}>0$ always holds when $\delta\leq \tfrac{1}{2}$, but may also hold for some $\delta>\tfrac{1}{2}$. Combining \cite[Theorem~3]{DZ16} with the fractal uncertainty principle of \cite{BD17} implies the estimate (1.2) with $c$ replaced by $2\nu$. Since our exponent $c(\nu,\delta)<2\nu$ in the full range $\nu\in(0,\beta_{BD})$, we improve the bound given by \cite{DZ16}, \cite{BD17}. Similarly to Theorem~\ref{thm:1.1}, this theorem also improves a result of \cite{Dya19b} for the case of hyperbolic surfaces. Indeed, applied to surfaces, \cite[Theorem~2]{Dya19b} gives the bound (1.2) in the range $\nu\in\left(0,\tfrac{1}{2}-\delta\right)$ with $c$ replaced by $\nu(1-2\nu)/(1-\delta-2\nu)$. In addition to providing a bound on the larger range $(0,\beta_{BD})$, since our exponent $c(\nu,\delta)<\nu(1-2\nu)/(1-\delta-2\nu)$ on $\left(0,\tfrac{1}{2}-\delta\right)$, our result also improves the bound of \cite[Theorem~2]{Dya19b} on its range of validity. Thus, Theorem~\ref{thm:1.2} improves all known resolvent bounds for convex co-compact hyperbolic surfaces in the entire range $\left(0,\beta_{BD}\right)$, see Figure 1. Finally, we note that \cite{DW16} gives a lower resolvent bound which also applies to more general settings.

Next we turn to the related setting of quantum open baker's maps which are important models in the study of open quantum chaos, see \cite[§1.4]{DJ17} for more on that connection. Given a \emph{base} $M\in\mathbb{N}$, an \emph{alphabet} $\mathcal A\subset\mathbb{Z}_{M}=\{0,1,\ldots,M-1\}$, and a \emph{cutoff} $\chi \in C_{c}^{\infty}((0,1);[0,1])$, the corresponding quantum open baker's map is defined as an operator on $\ell_{N}^{2}:=\ell^{2}(\mathbb{Z}_{N})$ given by
\[
B_{N}:= \mathcal{F}_{N}^{*}\begin{pmatrix}
 \chi_{N/M}\, \mathcal{F}_{N/M}\, \chi_{N/M} & & & \\
& \ddots & & \\
& &  \chi_{N/M}\, \mathcal{F}_{N/M}\, \chi_{N/M}
\end{pmatrix} I_{\mathcal A,M}.
\]

\noindent Here $N=M^{k}$ for $k\in\mathbb{N}$, $ \mathcal{F}_{N}$ is the discrete Fourier transform, $ \chi_{N/M}$ is a discretization of $\chi$, and $I_{\mathcal A,M}$ is the diagonal matrix whose $j$th entry is $1$ if $\bigl\lfloor \tfrac{Mj}{N}\bigr\rfloor\in \mathcal A$ and $0$ otherwise. The set of trapped orbits for these systems (which corresponds to the limit set $\Lambda_{\Gamma}$ for hyperbolic surfaces) form Cantor sets, whose dimension is $\delta:=\frac{\log|\mathcal A|}{\log M}\in(0,1)$. See Section 4 for more details.

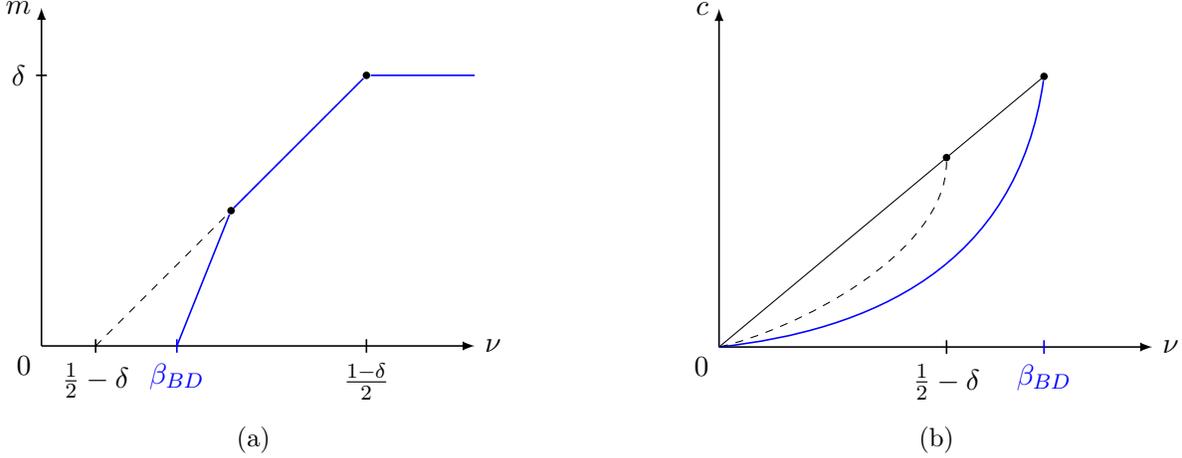
\begin{figure}[ht!]
    \centering
    
    \begin{subfigure}[b]{0.45\textwidth}
        \centering
        \begin{tikzpicture}[scale=0.9, xscale=.8]
            \draw[-latex,  semithick] (0,0)--node[pos=0, below left] {$0$} node[pos=1, right, semithick] {$\nu$} (8,0);
            \draw[-latex,  semithick] (0,0)--node[pos=1, left] {$m$} (0,5);

            \draw[semithick] (1,.1)--node[pos=1, below] {$\frac{1}{2}-\delta$} (1,-.1);
            \draw[blue,semithick] (2.5,.1)--node[pos=1, below] {$\beta_{BD}$} (2.5,-.1);
            \draw[semithick] (6,.1)--node[pos=1, below] {$\frac{1-\delta}{2}$} (6,-.1);
            \draw[semithick] (.1,4)--node[pos=1, left] {$\delta$} (-.1,4);

            \coordinate (o) at (0,0);
            \coordinate (a) at (0,4);
            \coordinate (b) at (1,0);
            \coordinate (c) at (2.5,0);
            \coordinate (d) at (6,0);

            \draw[dashed] (b)--node[pos=.5,circle,fill=black,inner sep=1pt] (p1) {} node[pos=1, circle, fill=black, inner sep=1pt] (p2) {} (d|-a);
            \draw[semithick, blue] (c)--(p1)--(p2)--(8,4);
        \end{tikzpicture}
        \caption{}
        \label{fig:sub-a}
    \end{subfigure}
    \hfill
    \begin{subfigure}[b]{0.45\textwidth}
        \centering
        \begin{tikzpicture}[scale=0.9, xscale=.8]
            \draw[-latex,  semithick] (0,0)--node[pos=0, below left] {$0$} node[pos=1, right, semithick] {$\nu$} (8,0);
            \draw[-latex,  semithick] (0,0)--node[pos=1, left] {$c$} (0,5);

            \draw[blue,semithick] (6,.1)--node[pos=1, below] {$\beta_{BD}$} (6,-.1);

            \coordinate (o) at (0,0);
            \coordinate (a) at (0,4);
            \coordinate (b) at (1,0);
            \coordinate (c) at (2.5,0);
            \coordinate (d) at (6,0);

            \draw[] (o)--node[pos=.7,circle,fill=black,inner sep=1pt] (p1) {} node[pos=1, circle, fill=black, inner sep=1pt] (p2) {} (d|-a);

            \coordinate (u) at (o-|p1);
            \draw[semithick] ($(u)+(0,.1)$)--node[pos=1, below] {$\frac{1}{2}-\delta$} ($(u)+(0,-.1)$);
            
            \draw[semithick, blue] (p2) to[out=-100,in=5] (o);
            \draw[dashed] (p1) to[out=-90,in=10, looseness=.7] (o);
        \end{tikzpicture}
        \caption{}
        \label{fig:sub-b}
    \end{subfigure}

    \caption{(a) Plot of $m(\nu, \delta)$ in Theorem 1.1 for a $\delta<\frac{1}{2}$. The dashed line is the previous bound of \cite[Theorem 1]{Dya19b}. (b) Plot of $c(\nu, \delta)$ in Theorem 1.2, again for a $\delta<\frac{1}{2}$. The solid line is the upper resolvent bound $c=2 \nu$ of \cite{DZ16}, \cite{BD17}. The dashed curve is the previous upper bound of \cite[Theorem 2]{Dya19b}.}
    \label{fig:twoside}
\end{figure}

Much of the theory of resonances for quantum open baker's maps parallels that for more complicated settings, including hyperbolic surfaces. In this context, we study resonances in annular regions and the appropriate analogue of (1.1) is the counting function

\[
\mathcal N_{k}(\nu):=\#\left\{\lambda\in \operatorname{Sp}(B_{N}):\, |\lambda|\ge M^{-\nu}\right\},\quad \nu\ge 0.
\]

In \cite[Theorem~3]{DJ17}, Dyatlov–Jin proved the estimate: For $\nu>0$ and  each $\varepsilon>0$,

\begin{equation*}
\mathcal N_{k}(\nu)\leq C\,N^{\min\left(2\left(\nu-\left(\tfrac{1}{2}-\delta\right)\right),\,\delta\right)+\varepsilon}, \quad k\to\infty. \tag{1.3} \label{eq:103}
\end{equation*}

\noindent Notice that, as with the bound of \cite[Theorem~1]{Dya19b}, the exponent $\min\left(2\left(\nu-\left(\tfrac{1}{2}-\delta\right)\right),\,\delta\right)$ matches $\tfrac{1}{2}-\delta$. However, using the fractal uncertainty principle for Cantor sets, it is also proved in \cite{DJ17} that certain improved spectral gaps exist beyond $\tfrac{1}{2}-\delta$. These include a gap $\beta>\tfrac{1}{2}-\delta$ and a gap $\beta_{E}=\tfrac{3}{4}\left(\tfrac{1}{2}-\delta\right)+\tfrac{\gamma_{\mathcal A}}{8}$, where $\gamma_{\mathcal A}>0$ is explicitly computable in terms of the additive energy of the system. See \cite[Section 3]{DJ17} for details.

We may now state the following fractal Weyl bound which improves over (1.3).

\begin{thm} \label{thm:1.3}
For each $\nu>0$ and $\varepsilon>0$, we have
\[
\mathcal N_{k}(\nu)\leq C\,N^{m(\nu,\delta)+\varepsilon}\quad \text{as } k\to\infty
\]
where
\[
m(\nu,\delta):=\min \left(4(\nu-\beta),\,4\bigl(\nu-\beta_{E}\bigr),\,2\Bigl(\nu-\bigl(\tfrac{1}{2}-\delta\bigr)\Bigr),\,\delta\right)
\]
for some $\beta>\tfrac{1}{2}-\delta$, and $\beta_{E}=\tfrac{3}{4}\bigl(\tfrac{1}{2}-\delta\bigr)+\tfrac{\gamma_{\mathcal A}}{8}$.
\end{thm}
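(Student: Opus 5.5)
We follow the determinant method of Dyatlov--Jin \cite{DJ17}, but with a modified determinant. Fix $\nu$ and let $\lambda_1,\dots,\lambda_{\mathcal N}$ be the eigenvalues of $B_N$ with $|\lambda_j|\ge M^{-\nu}$, counted with multiplicity. By Weyl's inequality relating eigenvalues and singular values, for any integer $n\ge 1$,
\[
M^{-2n\nu\mathcal N_k(\nu)}\le \prod_{j\le \mathcal N_k(\nu)}|\lambda_j|^{2n}\le \prod_{j\le\mathcal N_k(\nu)} s_j(B_N^{n})^2 ,
\]
and the right-hand side is controlled by a determinant of the form $\det(I+B_N^{n*}B_N^{n})$ or, more flexibly, by a product of singular values of a suitably localized power. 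Instead of passing to the Hilbert--Schmidt norm of $B_N^n$ as in \cite{DJ17}, we plan to split $B_N^{n}$ at time $n\approx \rho k$. We write it as a composition in which the indicator of the Cantor set $\mathcal C_k$ (at scale $N^{-\rho}$) appears on both the position and frequency sides. The resulting determinant then factors as a product over two pieces: one piece of rank $\lesssim N^{\delta}$ (the number of intervals in the discretized Cantor set) and one piece whose operator norm is the fractal uncertainty quantity $\|1_{\mathcal C}\mathcal F_N 1_{\mathcal C}\|$.

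The key steps are as follows. \emph{Step 1:} Decompose $B_N^{n}=A_1A_2+\mathcal O(N^{-\infty})$, where $A_1$ is microlocalized to an $N^{-\rho}$-neighborhood of the forward trapped set and $A_2$ to the backward one, using the explicit structure of $B_N$ (block Fourier transforms and $I_{\mathcal A,M}$), as in \cite[\S2]{DJ17}. \emph{Step 2:} Estimate singular values. We have $s_j(A_1A_2)\le s_{j_1}(A_1)s_{j_2}(A_2)$ with $j\le j_1+j_2$. Moreover $A_i$ has about $N^{\delta}$ singular values of size $\le 1$, and all remaining singular values are negligible. Combining this with an operator-norm bound $\|A_1A_2\|\le N^{-\beta'}$, where $\beta'\in\{\beta,\beta_E\}$ is the FUP exponent of \cite{DJ17}, we bound $\prod_{j\le \mathcal N}s_j(B_N^n)$ by a quantity of the form $N^{-\beta'\mathcal N}$ times a correction that accounts for the rank. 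Here we use the fractal uncertainty principle itself, run at the intermediate scale rather than only at the Ehrenfest time. \emph{Step 3:} Compare the resulting two-sided inequality $M^{-2n\nu\mathcal N}\lesssim\cdots$, choose the time $n=\rho k$ (with $\rho\le 2$, up to twice the Ehrenfest time $k$), and optimize over $\rho$. The linear terms in $\nu-\beta'$ should produce the exponents $4(\nu-\beta)$ and $4(\nu-\beta_E)$. The Hilbert--Schmidt route, retained as a competing estimate, gives $2(\nu-(\tfrac12-\delta))$, and the trivial rank bound gives $\delta$. Taking the minimum of these yields $m(\nu,\delta)$.

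The main obstacle is Step 2. We must apply the fractal uncertainty principle not to a single norm but inside a product of many singular values, with $\mathcal C_k$ replaced by its discretization at the time-$\rho k$ scale, which is not the Ehrenfest time. The FUP bound must therefore hold uniformly for Cantor sets at mismatched scales $N^{-\rho}$ versus $N^{-(1-\rho)}$. For $\beta$ we expect to get this from the self-similarity of $\mathcal C_k$ (restricting to sub-alphabets $\mathcal A^{j}$), and for $\beta_E$ from the additive-energy estimate of \cite[\S3]{DJ17}. This regime should be what produces the factor $4$ rather than $2$, and getting the bookkeeping of exponents right there is the delicate part.
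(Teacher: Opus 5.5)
Your Step~2 cannot produce the exponent $4(\nu-\beta)$; this is a genuine gap, not a bookkeeping issue. An operator-norm bound $\|A_1A_2\|\le N^{-\beta'}$ together with a rank bound $\lesssim N^{\delta}$ gives only $\prod_{j\le\mathcal N}s_j\le N^{-\beta'\mathcal N}$. Comparing this with $M^{-n\nu\mathcal N}$ is homogeneous in $\mathcal N$: you get either $\mathcal N=0$ (a gap statement) or no information, never a polynomial bound like $N^{4(\nu-\beta)}$. A counting bound needs a summability bound on the singular values. If $\sum_j s_j^4\le S$, then AM--GM gives $\prod_{j\le\mathcal N}s_j\le (S/\mathcal N)^{\mathcal N/4}$, hence $\mathcal N\le S\cdot(\text{growth factor})^4$. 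The $4$ in $4(\nu-\beta)$ is this Schatten exponent, not an effect of FUP at mismatched scales.

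The missing idea is that the FUP arguments of \cite{DJ17} actually control $t_k=\operatorname{tr}((T_k^*T_k)^2)=\sum_j s_j(T_k)^4$, where $T_k=\mathbf 1_{C_k}\mathcal F_N\mathbf 1_{C_k}$, and not only $\|T_k\|$. First, $t_k$ is submultiplicative (Lemma~\ref{lm:4.2}), so Fekete's lemma gives $t_k\le C_\varepsilon N^{-4\beta+\varepsilon}$, with $\beta>\frac12-\delta$ because $t_k<|C_k|^4/N^2$ for $k\ge 2$. Second, $t_k\le N^{-3/2}|C_k|^{3/2}E_{\mathcal A}(C_k)^{1/2}$ via $\|\mathcal F_N\mathbf 1_{C_k}\|_{\ell^4_N}^4=E_{\mathcal A}(C_k)/N$, which gives $\beta_E$. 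So the $\beta$ in the theorem is this Schatten-$4$ exponent. Since $\|T_k\|^4\le t_k$, it may be smaller than the operator-norm exponent of \cite{DJ17}, and your plan to obtain the bound with $\beta'$ equal to the latter is not supported.

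Second, even with the correct Schatten-$4$ input, applying Weyl's inequality to a fixed power $B_N^n$ loses a factor $2$ in $\nu$. For $n\lesssim k$, $B_N^n$ localizes its input in position and its output in frequency once each, so its singular values do not see the fractal uncertainty principle at all. The operator $\mathbf 1_X\mathcal F_N^*\mathbf 1_X$ appears inside $B_N^n$ only for $n\approx 2\bar k$, and then the eigenvalues enter as $|\lambda|^{2\bar k}\ge N^{-2\rho\nu}$. The best this route yields is $\mathcal N\lesssim N^{8\nu-4\beta+\varepsilon}$, and with norm bounds only, a gap of $\beta/2$.

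The paper avoids this loss through the approximate inverse identity \eqref{eq:406}. There $B(\lambda)=J(\lambda)\mathbf 1_X\mathcal F_N^*\mathbf 1_X\mathcal F_N+O(N^{-\infty})$ with $\|J(\lambda)\|\le|\lambda|^{-\bar k}$. Only one Ehrenfest time is paid in the main term; the second is absorbed into $Z(\lambda)$, which plays no role in the count. The paper then counts zeros of $F(\lambda)=\det(I-B(\lambda)^4)$ by Jensen's inequality, using three ingredients:
\begin{itemize}
\item the upper bound $\log|F(\lambda)|\le\|B(\lambda)^4\|_{\operatorname{tr}}\le C\|J(\lambda)\|^4\operatorname{tr}((T^*T)^2)$;
\item Lemma~\ref{lm:4.1}, which passes from $X$ to $C_k$ at cost $N^{4(1-\rho)}$;
\item a lower bound on $|F|$ at $\lambda=2$.
\end{itemize}
Because $B(\lambda)$ depends on $\lambda$, this determinant argument, rather than a single Weyl inequality for $B_N^n$, is what captures the correct dependence on $\nu$.
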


We remark that only one of the exponents $4(\nu-\beta)$ or $4\left(\nu-\beta_{E}\right)$ is relevant for a given $\delta\in(0,1)$ with, in general, $\beta_{E}>\beta$ when $\delta\approx \tfrac{1}{2}$. As is the case for hyperbolic surfaces, since there is also a spectral gap with $\beta>0$ (see \cite[Theorem~1]{DJ17}), our estimate may or may not give new information when $\delta>\tfrac{1}{2}$, but always improves known estimates when $\delta\leq \tfrac{1}{2}$. Unlike the case of hyperbolic surfaces, however, since in this case we also match the additive energy gap, we can conclude with certainty that Theorem~\ref{thm:1.3} does give new information at least for some $\delta>\tfrac{1}{2}$. This is because the best known lower bound on the gap $\beta_{E}$ obtained by additive energy estimates is in general larger than the best known lower bound on the gap $\beta>\max \left(0,\tfrac{1}{2}-\delta\right)$ when $\delta\approx \tfrac{1}{2}$. See Figure 2, and see \cite[§3.4]{DJ17}, \cite[§5.2]{Dya19a} for more discussion.

\subsection{Further context and overview.} A corollary of our Theorem~\ref{thm:1.1} is an upper bound on the number of zeros of the Selberg zeta function since its zeros agree with resonances (see \cite{Bor16} for the definition and properties of this function). Using dynamical properties of the Selberg zeta function, \cite{GLZ04} shows that for any convex co-compact hyperbolic surface,

\begin{equation*}
\mathcal N(R,\nu)\leq C R^{\delta}\quad \text{as } R\to\infty, \tag{1.4} \label{eq:104}
\end{equation*}

\noindent for any $\nu$; the proof also works for convex co-compact Schottky manifolds in higher dimensions. Similar methods were subsequently used by Naud \cite{Nau14}, who improved (1.4) in the region $\nu<\frac{1-\delta}{2}$ by showing that there exists a function $\tau(\nu)$, positive in the range $\nu\in\left(\tfrac{1}{2}-\delta,\tfrac{1-\delta}{2}\right)$, for which

\begin{equation*}
\mathcal N(R,\nu)\leq C R^{\min(\delta-\tau(\nu),\,\delta)}\quad \text{as } R\to\infty. \tag{1.5} \label{eq:105}
\end{equation*}

\noindent Dyatlov \cite{Dya19b}, then improved (1.5) to
\begin{equation*}
\mathcal N(R,\nu)\leq C R^{\min\left(2\left(\nu-\left(\tfrac{1}{2}-\delta\right)\right),\,\delta\right)+\varepsilon}\quad \text{as } R\to\infty,
\end{equation*}
for any $\varepsilon>0$, which matches the Patterson–Sullivan spectral gap and makes Naud's bound explicit; he also gives an analogous bound for convex co-compact hyperbolic manifolds of higher dimensions. Dyatlov's approach is quite different from \cite{GLZ04,Nau14}, and involves applying the methods of Vasy, \cite{Vas13a,Vas13b} to reduce the problem to the estimate of a determinant function defined in terms of an approximate inverse to the semiclassically rescaled resolvent; see \cite[Section~1]{Dya19b} for an outline of his proof.

Our Theorem~\ref{thm:1.1} continues this string of improvements to (1.4), taking advantage of the improved spectral gap proved by Bourgain–Dyatlov \cite{BD17}, and combining their methods with those of \cite{Dya19b} to further improve the fractal Weyl bound. In particular, we follow much of the analysis of \cite{Dya19b}, but work with a different determinant function which allows us to reduce Theorem~\ref{thm:1.1} to an estimate of an oscillatory integral (see (2.16), (2.17)). This integral can be estimated via Fourier decay, and in Lemma 2.3 a slight modification of the argument of \cite{BD17} allows us to complete the proof of Theorem 1.1.

An analogous approach is used to prove Theorem~\ref{thm:1.3}; using a different determinant function than in \cite{DJ17}, we reduce the proof to an estimate of a quantity to which we may apply the method of proof of the fractal uncertainty principle of \cite[Theorem~1]{DJ17}. Here, the simpler structure present in the setting of quantum open baker's maps also allows an estimate in terms of the additive energy of the system.

\begin{figure}[ht!]
    \centering
    
    \begin{subfigure}[b]{0.45\textwidth}
        \centering
        \begin{tikzpicture}[scale=0.9, xscale=.8]
            \draw[-latex,  semithick] (0,0)--node[pos=0, below left] {$0$} node[pos=1, right, semithick] {$\nu$} (8,0);
            \draw[-latex,  semithick] (0,0)--node[pos=1, left] {$m$} (0,5);

            \draw[semithick] (1,.1)--node[pos=1, below] {$\frac{1}{2}-\delta$} (1,-.1);
            \draw[blue,semithick] (2.5,.1)--node[pos=1, below] {$\beta$} (2.5,-.1);
            \draw[semithick] (6,.1)--node[pos=1, below] {$\frac{1-\delta}{2}$} (6,-.1);
            \draw[semithick] (.1,4)--node[pos=1, left] {$\delta$} (-.1,4);

            \coordinate (o) at (0,0);
            \coordinate (a) at (0,4);
            \coordinate (b) at (1,0);
            \coordinate (c) at (2.5,0);
            \coordinate (d) at (6,0);

            \draw[dashed] (b)--node[pos=.5,circle,fill=black,inner sep=1pt] (p1) {} node[pos=1, circle, fill=black, inner sep=1pt] (p2) {} (d|-a);
            \draw[semithick, blue] (c)--(p1)--(p2)--(8,4);
        \end{tikzpicture}
        \caption{}
        \label{fig:sub-a}
    \end{subfigure}
    \hfill
    \begin{subfigure}[b]{0.45\textwidth}
        \centering
        \begin{tikzpicture}[scale=0.9, xscale=.8]
            \draw[-latex,  semithick] (0,0)--node[pos=0, below left] {$0$} node[pos=1, right, semithick] {$\nu$} (8,0);
            \draw[-latex,  semithick] (0,0)--node[pos=1, left] {$m$} (0,5);

            \draw[blue,semithick] (.5,.1)--node[pos=1, below] {$\beta_{E}$} (.5,-.1);
            \draw[semithick] (4,.1)--node[pos=1, below] {$\frac{1-\delta}{2}$} (4,-.1);
            \draw[semithick] (.1,4)--node[pos=1, left] {$\delta$} (-.1,4);

            \coordinate (o) at (0,0);
            \coordinate (a) at (0,4);
            \coordinate (b) at (.5,0);
            \coordinate (c) at (2.5,0);
            \coordinate (d) at (4,0);

            \draw[dashed] (0,.5)--node[pos=0,circle,fill=black,inner sep=1pt] (p0) {} node[pos=.5,circle,fill=black,inner sep=1pt] (p1) {} node[pos=1, circle, fill=black, inner sep=1pt] (p2) {} (d|-a);
            \draw[semithick, blue] (b)--(p1)--(p2)--(8,4);
        \end{tikzpicture}
        \caption{}
        \label{fig:sub-b}
    \end{subfigure}

    \caption{Plot of $m(\nu, \delta)$ from Theorem 1.3 when $(a)\; \delta<\frac{1}{2}$ and $\beta>\beta_E$, and (b) $\delta>\frac{1}{2},\left|\delta-\frac{1}{2}\right|$ small so that $\beta_E>\beta$. The dashed line in both is the previous bound of \cite[Theorem 3]{DJ17}.}
    \label{fig:twoside}
\end{figure}
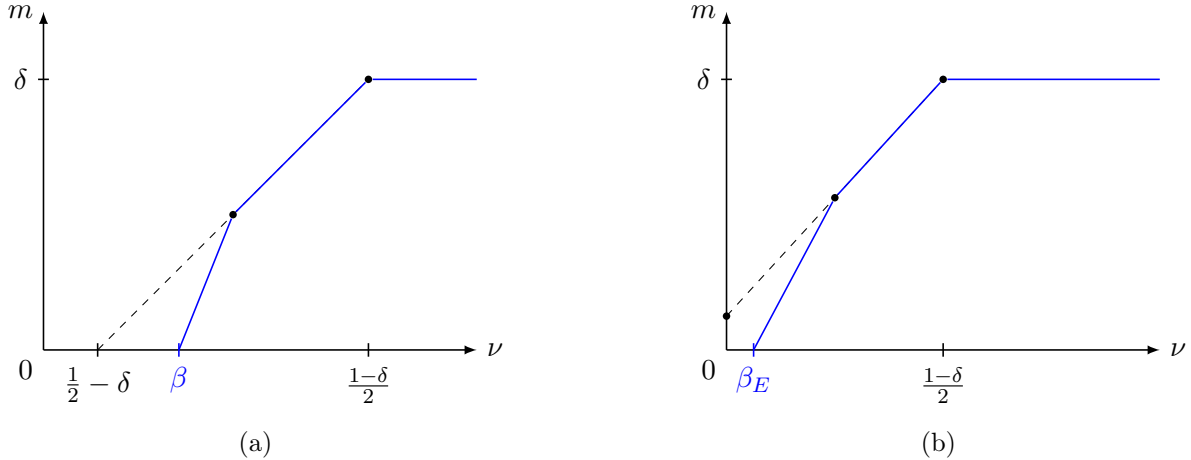

Finally, we mention some other fractal Weyl bounds. Using refined transfer operator methods of \cite{DZw18}, the paper \cite{Soa23} gives an improved fractal Weyl bound for hyperbolic surfaces for the global counting function
\begin{equation*}
\widetilde{\mathcal N}(R,\nu):=\#\left\{\lambda\ \text{resonance}:\ \operatorname{Re}\lambda\in[0,R],\ \operatorname{Im}\lambda\ge -\nu\right\}.
\end{equation*}
showing that
\begin{equation*}
\widetilde{\mathcal N}(R,\nu)\leq C R^{\,1+\delta-\frac{3\delta+2}{2\delta+2}(1-\delta-2\nu)+\varepsilon} \quad \text{as }  R\to\infty \tag{1.6} \label{eq:106}
\end{equation*}

\noindent With $m(\nu,\delta)$ as in Theorem~\ref{thm:1.1}, we can obtain a bound on $\widetilde{\mathcal N}(R,\nu)$ by integrating to get
\begin{equation*}
\widetilde{\mathcal N}(R,\nu)\leq C R^{\,1+m(\nu,\delta)+\varepsilon}\quad \text{as } R\to\infty.
\end{equation*}

\noindent This may or may not improve over (1.6) for some $\nu$, depending upon the size of the improvement $\beta_{BD}-\left(\tfrac{1}{2}-\delta\right)$. Vacossin, \cite{Vac23}, gives a bound similar to Naud's, (1.5), but for scattering by several strictly convex obstacles in the plane. His proof applies more generally to hyperbolic quantum monodromy operators, and the paper \cite{Tao24} shows that this includes certain asymptotically hyperbolic surfaces. Notably, using the fractal uncertainty principle, an improved spectral gap has also been given in these settings (see \cite{Vac24}, \cite{Tao24}) and it would be interesting if Vacossin's bound could be improved to match the improved spectral gap, analogously to our Theorem~\ref{thm:1.1}. We refer to \cite[§3.4]{Zwo17} for more information and history of fractal Weyl bounds.

The paper is organized as follows: The first half of the paper focuses on hyperbolic surfaces. In Section 2 we gather material from \cite{Dya19b}, \cite{BD17}, and establish some preliminary technical results. Section 3 gives the proofs of Theorems 1.1 and 1.2. Section 4 turns to quantum open baker's maps where we apply an analogous argument to that in Section 3 to give the proof of Theorem~\ref{thm:1.3}.

\vspace{0.2cm}

\noindent \textbf{Acknowledgements.} We thank Semyon Dyatlov for many helpful discussions, for suggesting the project, and for his encouragement throughout the writing of this paper. We also thank Tanya Christiansen for comments which improved the exposition. Finally, we thank Dan Cunningham for logistical help, and the Prison Mathematics Project - in particular Frank Connor, Ben Jeffers, and Tian An Wong - for handling the typing of the original manuscript.

\section{PRELIMINARIES}

The proofs of Theorems 1.1 and 1.2 combine the methods of \cite{Dya19b} and \cite{BD17}, and in this section we review definitions and gather some essential technical material. As mentioned in Section 1.1, the resolvent on a convex co-compact hyperbolic surface $M$ has a meromorphic continuation to $\mathbb{C}$ as an operator
\begin{equation*}
R(\lambda):=\left(-\Delta_{M}-\tfrac{1}{4}-\lambda^{2}\right)^{-1}: L_{c}^{2}(M)\to H_{\mathrm{loc}}^{2}(M),\qquad \lambda\in\mathbb{C},
\end{equation*}

\noindent see \cite{MM87}, \cite{Gu05}, and \cite{GuZw95} which also cover more general asymptotically hyperbolic manifolds. Here, we follow \cite[Section 2.2]{Dya19b} making use of an alternative method of proving meromorphic continuation of the resolvent due to Vasy, \cite{Vas13a}, \cite{Vas13b}. Our presentation will be minimalistic, gathering just what is needed to prove Theorems 1.1 and 1.2, referring to \cite{Dya19b} for more detail. 

We begin by defining the semiclassically rescaled resolvent
\begin{equation*}
R_{h}(\omega):=h^{-2} R(\lambda), \quad \omega:=h\lambda \in \Omega, 
\tag{2.1} \label{eq:201}
\end{equation*}

\noindent where we fix $\nu_0 > 0$ and set

$$
\Omega:=[1-2h,\,1+2h]+ih\,[ -\nu_{0},\,1] .
$$

\noindent We also define the extended, semiclassical differential operator as in \cite{Vas13a}, \cite{Vas13b}: 
\begin{equation*}
P_{h}(\omega)\in \Psi_{h}^2 \left(M_{\mathrm{ext}}\right), \quad
P_{h}(\omega)=\psi_{2}\left(-h^{2}\Delta_{M}-\frac{h^{2}}{4}-\omega^{2}\right)\psi_{1}\ \text{ on } M.
\tag{2.2} \label{eq:202}
\end{equation*}

\noindent Here $M_{\mathrm{ext}}$ is a compact surface without boundary, containing $M$ as an open subset, and $\psi_{1},\psi_{2}\in C^{\infty}(M)$ are certain nonvanishing functions. We refer to \cite{Vas13a}, \cite{Vas13b} for details on the construction of the operator $P_h(\omega)$, and to \cite{Zwo16}, \cite{DZw19} for simplified treatments of Vasy's method.

The advantage of working with $P_{h}(\omega)$ is the ease with which one can apply microlocal analytical methods; for our purposes, it is enough to simply quote a number of previously established properties of this operator:
\begin{enumerate}[label=(\roman*)]
    \item For any fixed $s>\tfrac{1}{2}+\nu_{0}$, $P_{h}(\omega): \mathcal{X}\to \mathcal{Y}$ is a holomorphic family of Fredholm operators, where
    \begin{equation*}
        \mathcal{X}:=\left\{u\in H_{h}^{s} \left(M_{\mathrm{ext}}\right):\ P_{h}(1)u\in H_{h}^{s-1} \left(M_{\mathrm{ext}}\right)\right\}, 
        \qquad 
        \mathcal{Y}:=H_{h}^{s-1} \left(M_{\mathrm{ext}}\right),
    \end{equation*}
    with
    \begin{equation*}
        \|u\|_{\mathcal{X}}^{2}:=\|u\|_{H_{h}^{s} \left(M_{\mathrm{ext}}\right)}^{2}+\|P_{h}(1)u\|_{H_{h}^{s-1} \left(M_{\mathrm{ext}}\right)}^{2}.
    \end{equation*}
    Note in particular that
    \begin{equation*}
        \|u\|_{\mathcal{X}}\leqslant C\,\|u\|_{H_{h}^{s+1} \left(M_{\mathrm{ext}}\right)}. \tag{2.3} \label{eq:203}
    \end{equation*}

 
    \item From (2.2), if $f\in C_{c}^{\infty}(M)$, then
    \begin{equation*}
        R_{h}(\omega)f=\left.\psi_{1}\Bigl(P_{h}(\omega)^{-1}\,\psi_{2}f\Bigr)\right|_{M}. \tag{2.4} \label{eq:204}
    \end{equation*}
    In particular, a bound on the number of poles of $P_{h}(\omega)^{-1}$ in a region $\Omega'\subset \Omega$ gives a bound on the number of poles of $R_{h}(\omega)$ in the same region.
\end{enumerate}

\begin{enumerate}[label=(\roman*),resume]
    \item We describe the following approximate inverse identity for $P_{h}(\omega)$ which is of crucial importance to both of the proofs of Theorems 1.1 and 1.2. It follows by combining \cite[Propositions~2.1, 3.1, and Lemma~3.2]{Dya19b}.
    
    \hspace{2em} Fix $\rho,\rho'\in(0,1)$ and $\varepsilon_{0}>0$ (in this paper, we only ever need $\rho$ near $1$, and in particular we will assume throughout that $\rho>\tfrac{1}{2}$). Then we can write
    \begin{equation*}
        I=Z(\omega)\,P_{h}(\omega)+A(\omega),\qquad \omega\in \Omega. \tag{2.5} \label{eq:205}
    \end{equation*}
    for $h$-dependent families of operators holomorphic in $\Omega$, satisfying
    \begin{equation*}
        Z(\omega):\mathcal{Y} \to \mathcal{X},\qquad 
        \|Z(\omega)\|_{\mathcal{Y}\to \mathcal{X}}\le C\,h^{-1-(\rho+\rho')\,(\nu_{0}+\varepsilon_{0})},\qquad \omega\in \Omega, \tag{2.6} \label{eq:206}
    \end{equation*}
    and
    \begin{equation*}
        A(\omega):\mathcal{X} \to \mathcal{X},\qquad 
        A(\omega)=J(\omega)\,A_{-}\,\tilde{A}\,A_{+}+\varepsilon(\omega),\qquad \omega\in \Omega. \tag{2.7} \label{eq:207}
    \end{equation*}
    where,
    \begin{itemize}
        \item For any $N>0$, $J(\omega),\,\varepsilon(\omega):H_{h}^{-N} \left(M_{\mathrm{ext}}\right)\to H_{h}^{N} \left(M_{\mathrm{ext}}\right)$ satisfy
        \begin{equation*}
            \|J(\omega)\|_{H_{h}^{-N}\to H_{h}^{N}}\le C_{N}\,h^{\rho'\bigl(h^{-1} \operatorname{Im}\omega-\varepsilon_{0}\bigr)} ,\qquad 
            \|\varepsilon(\omega)\|_{H_{h}^{-N}\to H_{h}^{N}}=O \left(h^{N}\right),\qquad \omega\in \Omega. \tag{2.8} \label{eq:208}
        \end{equation*}
        \item The operators $A_{-}:L^{2} \left(\mathbb{R}^{+}\times \mathbb{S}\right)\to L^{2} \left(M_{\mathrm{ext}}\right)$ and $A_{+}:L^{2} \left(M_{\mathrm{ext}}\right)\to
        L^{2} \left(\mathbb{R}^{+}\times \mathbb{S}\right)$ are bounded uniformly in $h$.
        
        \item If $(w,y)$ denote coordinates on $\mathbb{R}_{w}^{+}\times \mathbb{S}_{y}$, then
        \begin{equation*}
            \tilde{A}=\psi_{-}(y;h)\,\tilde{B}_{\psi}\,\psi_{+}(y;h)\,\psi_{0}(w;h)\,\tilde{\psi} \left(hD_{w}\right). \tag{2.9} \label{eq:209}
        \end{equation*}
        Here $\psi_{\pm}\in C^{\infty} \left(\mathbb{S};[0,1]\right)$ with
        \begin{equation*}
            \operatorname{supp}\psi_{+}\subset \Lambda_{\Gamma} (C_{1}h^{\rho}),\qquad 
            \operatorname{supp}\psi_{-}\subset \Lambda_{\Gamma} (C_{1}h^{\rho'}). \tag{2.10} \label{eq:210}
        \end{equation*}

        for some $C_{1}>0$ (where $\Lambda_{\Gamma}(\alpha):=\{\,y\in\mathbb{S}:\ |y-y'|\le \alpha \text{ for some } y'\in \Lambda_{\Gamma}\,\}$, and $|y-y'|$ denotes Euclidean distance in $\mathbb{R}^{2}$), 
$\psi_{0}\in C_{c}^{\infty} \left(\mathbb{R}^{+};[0,1]\right)$ with
\[
\operatorname{supp}\psi_{0}\subset \bigl[\,1-C_{1}h^{\rho},\ 1+C_{1}h^{\rho}\,\bigr],
\]
$\widetilde{\psi}\in C_{c}^{\infty}(\mathbb{R};[0,1])$ is independent of $h$, and $\widetilde{B}_{\psi}$ is the operator on $L^{2} \left(\mathbb{R}^{+}\times \mathbb{S}\right)$ given by
\[
\bigl(\widetilde{B}_{\psi}f\bigr)(w,y)
=(2\pi h)^{-1/2}\int_{\mathbb{S}}\left|\frac{y-y'}{2}\right|^{2iw/h}\,\psi(y,y')\,f(w,y')\,dy',
\qquad f\in L^{2} \left(\mathbb{R}^{+}\times \mathbb{S}\right),
\]
with $\psi\in C^{\infty} \left(\mathbb{S}\times \mathbb{S}\right)$ an $h$-independent function satisfying 
$\operatorname{supp}\psi\ \cap\ \{\,y=y'\,\}=\varnothing$.

Finally, we mention that $A(\omega)$ is Hilbert–Schmidt on $\mathcal{X}$ \cite[Proposition~3.1]{Dya19b}.
    \end{itemize}
\end{enumerate}

In the next section, we will use $A(\omega)$ to define a determinant function which we will use to prove Theorem~\ref{thm:1.1}. We then make estimates on this determinant function using methods from both \cite{Dya19b} and \cite{BD17}, and we describe some relevant quantities used in the latter paper now.

First, we note that to this point we have followed \cite{Dya19b} working in the unit disk model of the hyperbolic space (for which the limit set is a subset of the circle $\mathbb{S} \subset \mathbb{R}^{2}$); on the other hand, \cite{BD17} uses the half-plane model (for which the limit set is a compact subset of $\mathbb{R}$). The two models are equivalent and we could easily translate the material of \cite{BD17} to the disk model. However, since we isolate our use of the methods of \cite{BD17} to the single technical lemma below, it is more convenient to describe their results in the original context (i.e. the half-plane model), and to explain only the relationship between the two models relevant to our needs (see \cite{Bor16} for more information).

Assume $\delta \in(0,1)$ (which is the case in all interesting situations). Then by rotational symmetry, we may assume without loss of generality that $(1,0)\notin \Lambda_{\Gamma}$ (since $\Lambda_{\Gamma}$ is a strict subset of $\mathbb{S}$). We then use the standard transformation
$$
\sigma(x_{1},x_{2})
:=\left(
\frac{x_{1}^{2}+x_{2}^{2}-1}{\,x_{1}^{2}+(x_{2}+1)^{2}\,}\,,\ 
\frac{-2x_{1}}{\,x_{1}^{2}+(x_{2}+1)^{2}\,}
\right),
$$
which maps $\{(x_{1},x_{2})\in\mathbb{R}^{2}:x_{2}>0\}$ with (hyperbolic) metric 
$
ds^{2}=\frac{dx_{1}^{2}+dx_{2}^{2}}{x_{2}^{2}}
$
isometrically onto $\{(x_{1},x_{2})\in\mathbb{R}^{2}:x_{1}^{2}+x_{2}^{2}<1\}$ with (hyperbolic) metric 
$
ds^{2}=4\,\frac{dx_{1}^{2}+dx_{2}^{2}}{\bigl(1-(x_{1}^{2}+x_{2}^{2})\bigr)^{2}},
$
and relates $\Lambda_{\Gamma}\subset \mathbb{S}$ to
\[
\Lambda_{\Gamma}^{\mathbb{R}}:=\sigma^{-1}(\Lambda_{\Gamma})
\]

\noindent which is a compact subset of $\mathbb{R}$. As the notation suggests, \(\Lambda_{\Gamma}^{\mathbb{R}}\) is exactly the limit set for the surface \(M=\Gamma\backslash\mathbb{H}\) in the model obtained from the isometry \(\sigma\).

We also note a couple of facts for future use. Firstly, for any \(C_{1}>0\), there exists \(C_{2}>0\) such that
\begin{equation*}
\Lambda_{\Gamma} \left(C_{1}\alpha\right)\ \subset\ \sigma \left(\Lambda_{\Gamma}^{\mathbb{R}} \left(C_{2}\alpha\right)\right). \tag{2.11} \label{eq:211}
\end{equation*}

\noindent for all \(\alpha>0\) small (here \(\Lambda_{\Gamma}^{\mathbb{R}}(\alpha):=\{\,x\in\mathbb{R}:\ |x-x'|\le \alpha \text{ for some } x'\in \Lambda_{\Gamma}^{\mathbb{R}}\,\}\)). Secondly, there exists \(C>0\) such that for small \(\alpha,\alpha'>0\) and \(x_{0}\in \Lambda_{\Gamma}^{\mathbb{R}}\),
\begin{equation*}
C^{-1}\,\alpha^{1-\delta}\,(\alpha')^{\delta}
\ \le\
\bigl|\Lambda_{\Gamma}^{\mathbb{R}}(\alpha)\cap\{\,|x-x_{0}|\le \alpha'\,\}\bigr|
\ \le\
C\,\alpha^{1-\delta}\,(\alpha')^{\delta}.
\tag{2.12}\label{eq:212}
\end{equation*}

\noindent This is a version of Ahlfors–David regularity for the set \(\Lambda_{\Gamma}^{\mathbb{R}}\), and is proved in exactly the same way as \cite[(5.3)]{DZ16}; see \cite[§7.2]{DZ16}.

 A fundamental tool used in the study of convex co-compact hyperbolic surfaces is the Patterson–Sullivan measure, which is a probability measure on \(\Lambda_{\Gamma}^{\mathbb{R}}\) that we denote by \(\mu\). We recall the following crucial property of this measure; we refer to \cite[§14.1]{Bor16} and \cite[§2.4]{BD17} for more information on \(\mu\).

\medskip

\begin{thm} \label{thm:2.1}
    (\cite[Theorem~2]{BD17}). 
Let \(M\) be a convex co-compact hyperbolic surface, and let \(\delta\in(0,1)\) be the dimension of the limit set \(\Lambda_{\Gamma}^{\mathbb{R}}\).
Assume that
\[
\varphi\in C^{2}(\mathbb{R};\mathbb{R}),\qquad g\in C^{1}(\mathbb{R};\mathbb{C})
\]
are functions satisfying the following bounds for some constant \(C_{\varphi,g}\):
\[
\|\varphi\|_{C^{2}}+\|g\|_{C^{1}}\le C_{\varphi,g},\qquad 
\inf_{\,\Lambda_{\Gamma}^{\mathbb{R}}}\bigl|\varphi'(x)\bigr|\ \ge\ C_{\varphi,g}^{-1}.
\]
Then there exists \(\beta_{F}>0\) depending only on \(\delta\), and \(C>0\) depending only on \(M\) and \(C_{\varphi,g}\), such that
\begin{equation*}
\Bigl|\int_{\Lambda_{\Gamma}^{\mathbb{R}}} e^{i\xi\,\varphi(x)}\,g(x)\,d\mu(x)\Bigr|
\ \le\ C\,|\xi|^{-\beta_{F}/2}
\qquad \text{for all }|\xi|>1. \tag{2.13}\label{eq:213}
\end{equation*}

\end{thm}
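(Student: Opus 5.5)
The plan is to follow the strategy of \cite{BD17}: exploit the quasi-invariance of the Patterson--Sullivan measure \(\mu\) under \(\Gamma\) to rewrite the oscillatory integral as an average of many rescaled copies of itself, and then invoke Bourgain's discretized sum--product estimate for exponential sums of multiplicative convolutions. By a partition of unity subordinate to a Markov partition of \(\Lambda_{\Gamma}^{\mathbb{R}}\) (coming from a Schottky structure of \(\Gamma\)), and by absorbing cutoffs into \(g\), we may assume the integral is localized to a single Markov interval. We take \(\xi\) large.

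First, iterate the transfer operator identity \(\int f\,d\mu=\int \mathcal L_\delta f\,d\mu\), where \((\mathcal L_\delta f)(x)=\sum_{a}|\gamma_a'(x)|^{\delta}f(\gamma_a x)\), a total of \(n\) times. This writes
\[
\int e^{i\xi\varphi}g\,d\mu=\sum_{\mathbf a}\int |\gamma_{\mathbf a}'(x)|^{\delta}e^{i\xi\varphi(\gamma_{\mathbf a}x)}g(\gamma_{\mathbf a}x)\,d\mu(x),
\]
a sum over words \(\mathbf a\) of length \(n\). Choose \(n\) so that the derivatives \(|\gamma'_{\mathbf a}|\) are comparable to a fixed scale \(\tau\) (a power of \(|\xi|\)); this means grouping words by derivative size rather than by length. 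Next, do the same thing \(k\) more times at that scale, to produce products of \(k+1\) derivatives. On each small interval \(\gamma_{\mathbf a}(I)\) we Taylor expand \(\varphi\circ\gamma_{\mathbf a}\). The \(C^2\) bound on \(\varphi\), together with the bounded distortion of Möbius maps, makes the phase linear to the required accuracy. The phase then takes the form \(\xi\,\varphi'(x_{\mathbf a})\,\gamma'_{\mathbf b_1}\cdots\gamma'_{\mathbf b_k}\,\eta\) plus error terms. The lower bound \(|\varphi'|\ge C_{\varphi,g}^{-1}\) on \(\Lambda_{\Gamma}^{\mathbb{R}}\) guarantees that the frequency really has size \(|\xi|\tau^{k}\). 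Applying Cauchy--Schwarz / Hölder, the problem reduces to bounding
\[
\sum_{\mathbf b_1,\dots,\mathbf b_k}\Bigl|\int e^{i\eta\,\zeta_{1}(\mathbf b_1)\cdots\zeta_k(\mathbf b_k)}d\mu(\eta)\Bigr|\quad\text{or, after Fourier-expanding, }\ \Bigl|\sum e^{2\pi i\eta\, x_1\cdots x_k}\Bigr|,
\]
where the \(x_j\) range over the rescaled derivatives \(\tau^{-1}\gamma'_{\mathbf b_j}(\cdot)\), which lie in \([C^{-1},C]\).

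The key step, and the main obstacle, is to verify the non-concentration hypothesis of Bourgain's theorem for the measures given by the distribution of these rescaled derivatives. Bourgain's theorem says: if probability measures \(\mu_j\) on \([C^{-1},C]\) satisfy \(\mu_j([x-\sigma,x+\sigma])\le C\sigma^{\epsilon_0}\) for all \(\sigma\in[|\eta|^{-1},|\eta|^{-\epsilon_1}]\), then \(|\int e^{2\pi i\eta x_1\cdots x_k}d\mu_1\cdots d\mu_k|\le|\eta|^{-\epsilon_2}\), with \(k,\epsilon_2\) depending only on \(\epsilon_0\). To check non-concentration, I would use the cocycle structure \(\gamma'_{\mathbf a\mathbf b}=\gamma'_{\mathbf a}(\gamma_{\mathbf b}\cdot)\,\gamma'_{\mathbf b}\). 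For fixed \(\mathbf a\), the map \(\mathbf b\mapsto\gamma'_{\mathbf a}(\gamma_{\mathbf b}x)\) is a Möbius-derivative \((cx+d)^{-2}\) evaluated along points of the limit set. Because the function \(x\mapsto (cx+d)^{-2}\) has derivative bounded below and the limit set is not contained in a point, concentration of these values in a \(\sigma\)-interval would force concentration of \(\mu\) near a point. The Ahlfors--David regularity (2.12), equivalently \(\mu(I)\lesssim|I|^{\delta}\), then gives \(\sigma^{\epsilon_0}\) decay with \(\epsilon_0\) depending only on \(\delta\). This dependence only on \(\delta\) is what makes \(\beta_F\) independent of the surface. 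The delicate part is keeping track of the non-linearity of the Möbius derivatives, so that the "regular" subset of words on which the derivatives spread out carries most of the mass. I would handle the small exceptional set trivially, using \(\mu\)-mass bounds.

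Finally, combine everything. The terms coming from Taylor remainders are bounded by powers of \(\tau\) times \(|\xi|\tau^{2}\), and the Bourgain sums contribute \(|\xi|^{-\epsilon_2'}\). Choosing \(\tau=|\xi|^{-\theta}\) with \(\theta\) slightly above \(1/(k+2)\) balances the two contributions and gives (2.13) with \(\beta_F/2>0\). The constants depend only on \(C_{\varphi,g}\) and \(M\), since the \(C^1\) norm of \(g\) enters only through the Lipschitz error made when freezing \(g\) on the small intervals.
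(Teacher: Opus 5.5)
The paper gives no proof of this statement; it quotes \cite[Theorem~2]{BD17}. Your outline has the right overall shape, but two of its steps fail as written, and the second is the heart of the matter.

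\emph{First, the factorization of the frequency.} With all of $a,b_1,\dots,b_k$ varying, the chain rule evaluates $\gamma'_{b_j}$ at $\gamma_{b_{j+1}\cdots b_k}(x)$. This is a point of the interval $I_{b_{j+1}}$, whose location depends on $b_{j+1}$. Bounded distortion only determines $\gamma'_{b_j}$ there up to an $O(1)$ factor, not up to $1+O(\tau)$. So the frequency is not of the form $x_1(b_1)\cdots x_k(b_k)$, and Bourgain's estimate, which is a statement about product measures, cannot be applied. The fix is to interleave frozen words. Use $a_0b_1a_1b_2\cdots b_ka_k$ with all words at scale $\tau$, and fix $a_0,\dots,a_k$. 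Then the derivative equals $\tau^{2k}\zeta_1(b_1)\cdots\zeta_k(b_k)\,\gamma'_{a_k}$ times $1+O(\tau)$, where $\zeta_j(b)=\tau^{-2}\gamma'_{a_{j-1}b}(x_{a_j})$ and $x_{a_j}\in I_{a_j}$ is fixed. The $O(\tau)$ error forces $|\xi|\tau^{2k+2}\ll1$. Hence the effective frequency is at most $\tau^{-1+}$, and non-concentration is only needed at scales $\gtrsim\tau^{1-}$.

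\emph{Second, non-concentration.} The variable is then $\zeta(b)=\tau^{-2}\gamma'_a(\gamma_bx_c)\,\gamma'_b(x_c)$ with $a,c$ frozen, and your argument controls only the first factor. The second factor $\tau^{-1}\gamma'_b(x_c)\in[C^{-1},C]$ varies with $b$ in a way that the Ahlfors--David regularity of $\mu$ does not control; its distribution at scales $\tau^{\epsilon}$ is a non-lattice-type property of the group. So for a \emph{fixed} outer word, concentration of $\zeta$ does not force concentration of $\mu$ near a point, and the $\sigma^{\epsilon_0}$ bound does not follow.

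The missing idea is to prove non-concentration only for most frozen tuples, by varying the outer word while a pair $b,b'$ is held fixed. Write $y=\gamma_bx_c$, $y'=\gamma_{b'}x_c$ and $p_a=\gamma_a^{-1}(\infty)$. Then $\log\zeta_a(b)-\log\zeta_a(b')=2\log|y'-p_a|-2\log|y-p_a|+(\text{terms independent of }a)$. Its derivative in $p_a$ is $\asymp|y-y'|$, because $p_a$ stays a bounded distance from $y,y'$. The poles $p_a$ are distributed like $\mu$ down to scale $\tau$, so the weight of those $a$ with $|\log\zeta_a(b)-\log\zeta_a(b')|\le\sigma$ is $\lesssim(\sigma/|y-y'|+\tau)^{\delta}$. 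Integrating over $(b,b')$ gives a pair-count bound $\lesssim\sigma^{\delta}\log(1/\sigma)+\tau^{\delta}$ on average over $a$. Chebyshev then gives non-concentration, with an exponent depending only on $\delta$, outside a set of frozen tuples of small weight; that set is discarded with the trivial bound. This is where the nonlinearity of the M\"obius maps genuinely enters, not through $b\mapsto\gamma'_a(\gamma_bx)$ alone.
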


\noindent (Compared with the statement in \cite{BD17}, we followed \cite[§5.1]{Dya19a} writing $\varepsilon_{1}=\beta_{F}/2$.) This theorem is then used in \cite{BD17} to give a spectral gap of size
\[
\beta_{\mathrm{BD}}=\tfrac{1}{2}-\delta+\tfrac{\beta_{F}}{8}.
\]

\noindent It is this quantity which appears in our Theorems 1.1 and 1.2.

As remarked immediately after the statement of this theorem in \cite{BD17}, \eqref{eq:213} implies that the Fourier dimension of $\Lambda_{\Gamma}^{\mathbb{R}}$ is bounded below by $\beta_{F}$. Since the Hausdorff dimension of a set is always greater than or equal to the Fourier dimension, we obtain that $\beta_F$ must satisfy $\beta_{F}\le \delta$.

The proof of Theorem~\ref{thm:2.1} uses the following technical lemma, which is also an important tool for us in the proof of Theorems 1.1 and 1.2.

\begin{lm}\label{lm:2.2}(\cite[Lemma~4.2]{BD17}). For \(0<h<1\), define the function \(F_{h}(x)\) as the convolution of the Patterson–Sullivan measure \(\mu\) with the rescaled uniform measure on \([-2h,2h]\):
\begin{equation*}
F_{h}(x):=\frac{1}{4\,h^{\delta}}\;\mu\bigl([x-2h,\,x+2h]\bigr).
\tag{2.14} \label{eq:214}
\end{equation*}

\noindent Then for some constant $C_{\Gamma}>0$ depending only on $\Gamma$,
\begin{equation*}
F_{h}\ \ge\ C_{\Gamma}^{-1}\quad \text{on }\Lambda_{\Gamma}^{\mathbb{R}}(h). \tag{2.15} \label{eq:215}
\end{equation*}
\end{lm}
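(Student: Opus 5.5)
The plan is to derive the lower bound \eqref{eq:215} from the standard mass estimate for the Patterson--Sullivan measure: there is $C>0$ depending only on $\Gamma$ such that
\[
C^{-1}r^{\delta}\le \mu\bigl([x_0-r,x_0+r]\bigr)\le C r^{\delta},\qquad x_0\in\Lambda_\Gamma^{\mathbb R},\ 0<r<1.
\]
This is the $\delta$-Ahlfors--David regularity of $\mu$ (see \cite[§14.1]{Bor16}, \cite[§2.4]{BD17}). It is a direct consequence of Sullivan's shadow lemma for convex co-compact groups, and it is the measure-theoretic counterpart of \eqref{eq:212}. Only the lower bound is needed here.

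The argument then has two short steps. First, take $x\in\Lambda_\Gamma^{\mathbb R}(h)$. By definition there is a point $x_0\in\Lambda_\Gamma^{\mathbb R}$ with $|x-x_0|\le h$, and the triangle inequality gives
\[
[x_0-h,\,x_0+h]\subset[x-2h,\,x+2h].
\]
Second, apply the lower mass bound at $x_0$ with $r=h$, which requires $h<1$ (or $h$ below the scale where the regularity estimate is stated). This yields
\[
F_h(x)=\frac{1}{4h^\delta}\,\mu\bigl([x-2h,x+2h]\bigr)\ge \frac{1}{4h^\delta}\,\mu\bigl([x_0-h,x_0+h]\bigr)\ge \frac{C^{-1}}{4},
\]
so \eqref{eq:215} holds with $C_\Gamma:=4C$. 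If the regularity bound is only available for $r\le r_0$, the remaining range $r_0<h<1$ is handled separately. On that range $[x_0-h,x_0+h]\supset[x_0-r_0,x_0+r_0]$, so its $\mu$-mass is at least $C^{-1}r_0^\delta$. Since $h^\delta\le 1$, this gives the uniform lower bound $C^{-1}r_0^{\delta}/4$, and $C_\Gamma$ is enlarged accordingly.

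The only substantive input is the lower mass bound, so that is the step to be most careful about. I would cite it directly rather than reprove it. If a proof were needed, I would argue through the shadow lemma, which says that $\mu$ of the shadow of a hyperbolic ball around $\gamma o$ is comparable to $e^{-\delta d(o,\gamma o)}$. Convex co-compactness then guarantees that every interval of length $r$ centered at a limit point contains such a shadow with $e^{-d(o,\gamma o)}\asymp r$. This uses the cocompactness of the action on the convex hull, and it involves translating between the disk and half-plane boundary metrics via $\sigma$, which are comparable on a neighborhood of $\Lambda_\Gamma^{\mathbb R}$ because that set is compact.
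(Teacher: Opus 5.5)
Your argument is correct and is the standard one. You pick $x_0\in\Lambda_\Gamma^{\mathbb R}$ with $|x-x_0|\le h$, use the inclusion $[x_0-h,x_0+h]\subset[x-2h,x+2h]$, and apply the lower $\delta$-regularity bound $\mu([x_0-h,x_0+h])\ge C^{-1}h^{\delta}$ for the Patterson--Sullivan measure, with your extra care for the range $r_0<h<1$.

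The paper gives no proof of its own and simply quotes \cite[Lemma~4.2]{BD17}. That lemma follows in the same way from the regularity of $\mu$ recorded in \cite[\S2.4]{BD17}.
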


We now prove an important estimate for later use. We give the proof here since it is the only part of the paper which uses some of the above material in an essential way.

Fix constants $c_{1},c_{2}>0$, let $G\in C_{c}^{\infty}(\mathbb{R}^{2})$ with $\operatorname{supp}G\cap\{x=y\}=\varnothing$, and set
\begin{equation*}
I(h):=\iint_{\Lambda_{\Gamma}^{\mathbb{R}}(c_1h^{\rho})^2} \bigl|K(x,w;h)\bigr|^{2}\,dx\,dw. \tag{2.16} \label{eq:216}
\end{equation*}
where
\begin{equation*}
K(x,w;h)
:=\int_{\mathbb{R}} F_{c_{2}h^{\rho^{'}}}(\eta)\,
\exp \left(\frac{i}{h}\bigl(\Phi(x,\eta)-\Phi(w,\eta)\bigr)\right)\,
G(x,\eta)\,G(w,\eta)\,d\eta, \tag{2.17} \label{eq:217}
\end{equation*}
and
\[
\Phi(x,\eta)=2\log|x-\eta|.
\]

In Section 3, we reduce the main part of the proofs of Theorems 1.1 and 1.2 to the following estimate, which bounds quantities of the form $I(h)$ in terms of the Fourier decay exponent of Bourgain–Dyatlov in Theorem~\ref{thm:2.1}. The proof uses a combination of ideas from \cite[Propositions 4.1 and 4.3]{BD17}.

\begin{lm}\label{lm:2.3}
With $I(h)$ as defined in \eqref{eq:216}, \eqref{eq:217}, there is $C>0$ such that
\begin{equation*}
I(h) \le C\, h^{2\rho'(1-\delta)+2\rho(1-\delta)+\beta_{F}/2}
\end{equation*}
for small $h$.
\end{lm}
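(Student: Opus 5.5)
The plan is to expand $|K|^2$ as a double integral in $(\eta,\eta')$, so that $I(h)$ becomes an integral against $F_{c_2h^{\rho'}}(\eta)F_{c_2h^{\rho'}}(\eta')$ of the square of a single oscillatory integral in $x$. That inner integral is then controlled by the Fourier decay of Theorem~\ref{thm:2.1}, after the Lebesgue measure on $\Lambda_\Gamma^{\mathbb R}(c_1h^\rho)$ has been traded for the Patterson–Sullivan measure using Lemma~\ref{lm:2.2}. This mirrors the way \cite[Propositions 4.1, 4.3]{BD17} pass from Fourier decay of $\mu$ to a fractal uncertainty bound.

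\emph{Step 1 (replace indicators by $F$).} Put $h_1:=c_1h^\rho$. Since $|K|^2\ge0$ and $\Lambda_\Gamma^{\mathbb R}(h_1)\subset\Lambda_\Gamma^{\mathbb R}$-neighbourhoods of the type in Lemma~\ref{lm:2.2}, we get $\mathbf 1_{\Lambda_\Gamma^{\mathbb R}(h_1)}\le C_\Gamma F_{h_1}$. Applying this in both $x$ and $w$ gives
\[
I(h)\le C\iint F_{h_1}(x)F_{h_1}(w)\,|K(x,w;h)|^2\,dx\,dw .
\]
Expanding $|K|^2=K\overline K$ and integrating in $x,w$ first, we obtain
\[
I(h)\le C\iint F_{c_2h^{\rho'}}(\eta)F_{c_2h^{\rho'}}(\eta')\,|\mathcal J(\eta,\eta')|^2\,d\eta\,d\eta',
\]
where
\[
\mathcal J(\eta,\eta'):=\int F_{h_1}(x)\,e^{\frac ih(\Phi(x,\eta)-\Phi(x,\eta'))}G(x,\eta)G(x,\eta')\,dx .
\]

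\emph{Step 2 (Fourier decay for $\mathcal J$).} By \eqref{eq:214}, $\int F_{h_1}f\,dx=\tfrac14h_1^{-\delta}\int_{|t|\le2h_1}\int f(y+t)\,d\mu(y)\,dt$. Hence
\[
|\mathcal J|\le Ch^{\rho(1-\delta)}\sup_{|t|\le 2h_1}\Bigl|\int e^{\frac ih(\Phi(y+t,\eta)-\Phi(y+t,\eta'))}G(y+t,\eta)G(y+t,\eta')\,d\mu(y)\Bigr| .
\]
The phase has $y$-derivative $2(\eta-\eta')/((y+t-\eta)(y+t-\eta'))$. On $\operatorname{supp}G$ the denominators are bounded above and below, because $\operatorname{supp}G$ avoids the diagonal and is compact. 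So we can write the phase as $\xi\varphi(y)$ with $\xi=(\eta-\eta')/h$, where $\varphi=(\Phi(\cdot+t,\eta)-\Phi(\cdot+t,\eta'))/(\eta-\eta')$ has $C^2$ norm bounded and $|\varphi'|$ bounded below, uniformly in $t,\eta,\eta'$. The amplitude $g=G(\cdot+t,\eta)G(\cdot+t,\eta')$ is uniformly $C^1$ (cut off smoothly so it is globally defined). Theorem~\ref{thm:2.1} then gives
\[
|\mathcal J|^2\le Ch^{2\rho(1-\delta)}\min\bigl(1,(h/|\eta-\eta'|)^{\beta_F}\bigr).
\]
I expect the main care point to be this uniformity of $C_{\varphi,g}$. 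It should hold because $\varphi$ is a difference quotient of a smooth function on a compact set away from the singularity.

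\emph{Step 3 (the $\eta$ integral).} Using \eqref{eq:214} again with $h_2=c_2h^{\rho'}$, we have $F_{h_2}(\eta)\,d\eta$ equal to $h^{\rho'(1-\delta)}$ times an average of translates of $\mu$ by $|s|\le 2h_2$. This gives
\[
I(h)\le Ch^{2\rho(1-\delta)+2\rho'(1-\delta)}\sup_{|s|,|s'|\le2h_2}\iint\min\bigl(1,(h/|y-y'+s-s'|)^{\beta_F}\bigr)\,d\mu(y)\,d\mu(y') .
\]
Split the region of integration. On $\{|y-y'|\le Ch^{\rho'}\}$ we bound the integrand by $1$; the $\mu\times\mu$ measure of this set is $O(h^{\rho'\delta})$, by the Ahlfors–David regularity of $\mu$ (the measure analogue of \eqref{eq:212}). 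On the complement $|y-y'+s-s'|\gtrsim|y-y'|$, and a dyadic decomposition with $\mu\times\mu(|y-y'|\sim r)\lesssim r^\delta$ and $\beta_F\le\delta$ gives $O(h^{\beta_F}\log(1/h))$. Since $\rho'$ is near $1$ and $\beta_F\le\delta$, both terms are $O(h^{\beta_F/2})$. This yields the claimed bound. The part I would double-check is this last balancing: that the loss from the $h^{\rho'}$-smearing really only costs the factor $h^{\beta_F/2}$ in place of $h^{\beta_F}$, if necessary by taking $\rho'$ close enough to $1$ or by shrinking the splitting scale.
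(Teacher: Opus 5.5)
Your Step 3 does not prove the lemma in the generality in which it is stated and used. The near-diagonal region $\{|y-y'|\le Ch^{\rho'}\}$ costs $h^{\rho'\delta}$. So what you actually obtain is $I(h)\lesssim h^{2\rho(1-\delta)+2\rho'(1-\delta)}\bigl(h^{\rho'\delta}+h^{\beta_F}\log(1/h)\bigr)$, and this gives the claim only if $\rho'\delta\ge\beta_F/2$.

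Nothing makes $\rho'$ close to $1$: only $\rho>\tfrac12$ is assumed. In the proof of Theorem~\ref{thm:1.2} the lemma is used, through Proposition~\ref{prop:3.2}, with $\rho'\approx(1-\delta-2\beta_{BD})/(1-\delta-2\nu)=(\delta-\beta_F/4)/(1-\delta-2\nu)$, which for small $\nu$ is about $\delta/(1-\delta)$. Since all one knows is $\beta_F\le\delta$, the condition $\rho'\delta\ge\beta_F/2$ can fail (e.g.\ $\delta<\tfrac{2}{5}$, $\beta_F$ close to $\delta$, $\nu$ small). ``Taking $\rho'$ close to $1$'' is therefore not available, because $\rho'$ is dictated by the application.

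The repair stays inside your framework: do not split according to $|y-y'|$, but estimate the shifted integral uniformly in $u=s-s'$. The upper regularity bound holds around every center: $\mu([z-r,z+r])\le Cr^{\delta}$ for all $z\in\mathbb R$, because if the interval meets $\Lambda_\Gamma^{\mathbb R}$ at $z'$, it lies in $[z'-2r,z'+2r]$. Hence $\mu\otimes\mu\{|y-y'+u|\le r\}\le Cr^\delta$ for every $u$, and a dyadic decomposition in $|y-y'+u|$ starting at scale $h$ gives
\[
\sup_{u}\iint\min\bigl(1,(h/|y-y'+u|)^{\beta_F}\bigr)\,d\mu(y)\,d\mu(y')\le Ch^{\beta_F}\log(1/h),
\]
which is enough for every $\rho'\in(0,1)$.

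The paper never meets this issue because it runs the dual argument. It applies Theorem~\ref{thm:2.1} in the $\eta$ variable, against the $\mu$ hidden in $F_{c_2h^{\rho'}}$, with frequency $(x-w)/h$. There the $h^{\rho'}$-smearing is only a translation $\eta\mapsto\eta-s$ of the integration variable and never enters the frequency. The near/far split is then in $|x-w|$ at scale $h^{1/2}$ on the Lebesgue set $\Lambda_\Gamma^{\mathbb R}(c_1h^\rho)$, using \eqref{eq:212} and $\rho>\tfrac12$. Your route, which inserts $F_{c_1h^\rho}$ via Lemma~\ref{lm:2.2} and decays in $(\eta-\eta')/h$, is what pushes the $\rho'$-smearing into the frequency variable.

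Finally, Step 2 also needs the paper's preliminary partition of unity. Theorem~\ref{thm:2.1} requires $\varphi\in C^2(\mathbb R)$ with $|\varphi'|$ bounded below on all of $\Lambda_\Gamma^{\mathbb R}$, not just on $\operatorname{supp}g$. If the $y$-support of $G(\cdot,\eta)$ lies on both sides of $\eta$, then your $\varphi$ is positive to the left, negative to the right, and increasing on both sides. So in general it has no admissible extension. Reducing to $\operatorname{supp}G\subset J_1\times J_2$ with $J_1,J_2$ in disjoint open intervals fixes this.
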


\begin{proof}
Recalling the definition of $F_h$ in \eqref{eq:214}, we may write for an appropriate constant $C$,
\begin{align*}
K(x,w;h)
&= C\,h^{-\rho'\delta}\int_{\mathbb{R}}
  \mu \left(\bigl[\eta-2c_2 h^{\rho'},\,\eta+2c_2 h^{\rho'}\bigr]\right)
  \exp \left(\frac{i}{h}\bigl(\Phi(x,\eta)-\Phi(w,\eta)\bigr)\right)
  G(x,\eta)\,G(w,\eta)\,d\eta \\
&= C\,h^{-\rho'\delta}\int_{-2c_2 h^{\rho'}}^{2c_2 h^{\rho'}} 
   \int_{\Lambda_{\Gamma}^{\mathbb{R}}}
   \exp \left(\frac{i}{h}\bigl(\Phi(x,\eta-s)-\Phi(w,\eta-s)\bigr)\right)
   G(x,\eta-s)\,G(w,\eta-s)\,d\mu(\eta)\,ds .
\end{align*}

\noindent Thus, using Schwarz inequality, we have
\begin{equation*}
I(h)\le C\,h^{\rho'-2\rho'\delta}\int_{-2c_2 h^{\rho'}}^{2c_2 h^{\rho'}} 
\left(
  \iint_{\Lambda_{\Gamma}^{\mathbb{R}}(c_1 h^{\rho})^{2}}
  \bigl|\tilde{K}_s(x,w;h)\bigr|^{2}\,dx\,dw
\right) ds,
\tag{2.18}\label{eq:218}
\end{equation*}
where
\begin{equation*}
\tilde{K}_s(x,w;h)
= \int_{\Lambda_{\Gamma}^{\mathbb{R}}}
   \exp \left(\frac{i}{h}\bigl(\Phi(x,\eta-s)-\Phi(w,\eta-s)\bigr)\right)
   G(x,\eta - s)\,G(w,\eta - s)\,d\mu(\eta).
\tag{2.19}\label{eq:219}
\end{equation*}

We now mimic the end of the proof of \cite[Proposition 4.1]{BD17}, 
and, as in that proof, we may use a partition of unity for $G$ to reduce to the case in which 
$\operatorname{supp} G \subset J_{1}\times J_{2} \subset I_{1}\times I_{2} \Subset I_{1}^{\prime}\times I_{2}^{\prime}$
for $J_{1},J_{2}$ closed intervals, and $I_{1}, I_{2}, I_{1}^{\prime}, I_{2}^{\prime}$ open ones, with 
$I_{1}^{\prime}\cap I_{2}^{\prime}=\varnothing$. 
For $x,w \in \Lambda_{\Gamma}^{\mathbb{R}}(c_1 h^{\rho})\cap J_{1}$ and $s\in[-2c_{2}h^{\rho'},\,2c_{2}h^{\rho'}]$, define functions 
$\varphi_{x,w,s},\, g_{x,w,s}$ on $I_{2}$ by
\begin{equation*}
\begin{aligned}
\Phi(x,\eta - s) - \Phi(w,\eta - s) &= (x-w)\,\varphi_{x,w,s}(\eta),\\
g_{x,w,s}(\eta) &= G(x,\eta - s)\,G(w,\eta - s), \qquad \eta \in I_{2}.
\end{aligned}
\tag{2.20}\label{eq:220}
\end{equation*}

\noindent Then, using the form of $\Phi$ and the fact that $G$ is $C^{1}$, we easily obtain
\begin{equation*}
\|\varphi_{x,w,s}\|_{C^{2}(I_{2})}+\|g_{x,w,s}\|_{C^{1}(I_{2})}\le C,
\qquad 
\inf_{I_{2}}\bigl|\partial_{\eta}\varphi_{x,w,s}\bigr|\ge C^{-1}
\end{equation*}
for a constant $C$ independent of $x,w,s$. We can extend $\varphi_{x,w,s}, g_{x,w,s}$ to functions compactly supported on $\mathbb{R}$, and such that

$$
\|\varphi_{x,w,s}\|_{C^{2}(\mathbb{R})}+\|g_{x,w,s}\|_{C^{1}(\mathbb{R})}\le C,
\qquad 
\inf_{\Lambda_{\Gamma}^{\mathbb{R}}}\bigl|\partial_{\eta}\varphi_{x,w,s}\bigr|\ge C^{-1}
$$
for a possibly different $C$, but still independent of $x,w,s$. 
But \eqref{eq:219} and \eqref{eq:220} show that
$$
\tilde{K}_{s}(x,w;h)
=\int_{\Lambda_{\Gamma}^{\mathbb{R}}}
   \exp \bigl(i\,\xi\,\varphi_{x,w,s}(\eta)\bigr)\,
   g_{x,w,s}(\eta)\,d\mu(\eta),
\qquad 
\xi:=\frac{x-w}{h},
$$

\noindent so we may apply Theorem~\ref{thm:2.1} to get
\begin{equation*}
\bigl|\tilde{K}_{s}(x,w;h)\bigr|^{2}\le C\left|\frac{x-w}{h}\right|^{-\beta_{F}},
\quad x,w\in \Lambda_{\Gamma}^{\mathbb{R}}(c_1h^{\rho})\cap J_{1},\quad |x-w|>h,
\tag{2.21}\label{eq:221}
\end{equation*}
uniformly in $s\in[-2c_{2}h^{\rho'},\,2c_{2}h^{\rho'}].$

Now, for any $w\in \Lambda_{\Gamma}^{\mathbb{R}}(c_{1}h^{\rho})\cap J_{1}$ and $s\in[-2c_{2}h^{\rho'},\,2c_{2}h^{\rho'}]$, we have

\begin{align*}
\int_{\Lambda_{\Gamma}^{\mathbb{R}}(c_{1}h^{\rho})}\bigl|\tilde{K}_{s}(x,w;h)\bigr|^{2}\,dx
&= \int_{\Lambda_{\Gamma}^{\mathbb{R}}(c_{1}h^{\rho})\cap\{|x-w|\le h^{1/2}\}}
   \bigl|\tilde{K}_{s}(x,w;h)\bigr|^{2}\,dx \\
&\quad + \int_{\Lambda_{\Gamma}^{\mathbb{R}}(c_{1}h^{\rho})\cap\{|x-w|> h^{1/2}\}}
   \bigl|\tilde{K}_{s}(x,w;h)\bigr|^{2}\,dx.
\tag{2.22}\label{eq:222}
\end{align*}

\noindent To bound the first integral, we note that for any $w\in \Lambda_{\Gamma}^{\mathbb{R}}(c_{1}h^{\rho})$ we can find 
$w_{0}\in \Lambda_{\Gamma}^{\mathbb{R}}$ so that 
$\{x:|x-w|\le h^{1/2}\}\subset \{x:|x-w_{0}|<2h^{1/2}\}$ for small $h$
(here we use $\rho>\tfrac{1}{2}$).
By \eqref{eq:212}, the Lebesgue measure of 
$\Lambda_{\Gamma}^{\mathbb{R}}(c_1h^{\rho})\cap\{|x-w_{0}|<2h^{1/2}\}$ 
is bounded by $C h^{\rho(1-\delta)+\delta/2}$.
For the second integral, we use \eqref{eq:221} to bound it by 
$C h^{\rho(1-\delta)+\beta_{F}/2}$.
Then from \eqref{eq:222} and \eqref{eq:218}, and recalling that $\beta_{F}\le \delta$, we have
\begin{equation*}
I(h)\le C\,h^{2\rho'(1-\delta)+2\rho(1-\delta)+\beta_{F}/2} .
\end{equation*}
as claimed. 

\end{proof}

\section{PROOFS OF THEOREMS 1.1 AND 1.2}

\subsection{Improved fractal Weyl bound matching the improved spectral gap.} In this subsection, we will give the proof of Theorem~\ref{thm:1.1}. Along the way, we will also prove some bounds that will be used in the proof of Theorem~\ref{thm:1.2} in the next subsection.

We begin by defining the determinant function
\begin{equation*}
F(\omega):=\operatorname{det} \left(I-A(\omega)^{4}\right), \quad \omega \in \Omega.
\tag{3.1}\label{eq:301}
\end{equation*}
Here $\rho,\rho'\in(0,1)$, $\varepsilon_{0}>0$, $\nu_{0}>0$ are fixed but arbitrary, $A(\omega)$ is as defined in Section~2, and we recall that
\begin{equation*}
\Omega:=[1-2h,\,1+2h]+ih\,[-\nu_{0},\,1] .
\tag{3.2}\label{eq:302}
\end{equation*}
This function is well-defined since $A(\omega)$ is Hilbert–Schmidt (see Section~2), and hence $A(\omega)^{4}$ is trace-class. Moreover, it is not hard to see from Section~2 (iii) that
\begin{equation*}
\|A(\omega)\|_{\mathcal{X}\to\mathcal{X}} \le C\, h^{\rho'\,\bigl(h^{-1}\operatorname{Im}\omega-\varepsilon_{0}\bigr)}, \qquad \omega\in\Omega .
\tag{3.3}\label{eq:303}
\end{equation*}
and thus $\bigl(I-A(\omega_{0})^{4}\bigr)^{-1}$ certainly exists for some $\omega_{0}\in\Omega$, provided $\varepsilon_{0}$ was chosen sufficiently small. By analytic Fredholm theory (see, e.g., \cite[Theorem C.8]{DZw19}), $\bigl(I-A(\omega)^{4}\bigr)^{-1}$ exists as a meromorphic family in $\Omega$, with poles of finite rank; thus $F$ is holomorphic in $\Omega$ and its zeros are precisely these poles, with agreement of multiplicities (see \cite[Section B.5]{DZw19}). From \eqref{eq:205}, we see that
\begin{equation*}
P_{h}(\omega)^{-1}=\bigl(I-A(\omega)^{4}\bigr)^{-1}\bigl(I+A(\omega)+A(\omega)^{2}+A(\omega)^{3}\bigr)\,Z(\omega), \qquad \omega\in\Omega,
\end{equation*}
so the poles of $\bigl(I-A(\omega)^{4}\bigr)^{-1}$ agree with the poles of $P_{h}(\omega)^{-1}$. Since \eqref{eq:204} shows that any pole of $R_{h}(\omega)$ is a pole of $P_{h}(\omega)^{-1}$, we conclude from \eqref{eq:201} that for any $\nu<\nu_{0}$, if we put $h:=R^{-1}$, then an upper bound on the number of zeros of $F$ in the region
\begin{equation*}
\Omega' := [1,\,1+h]+ih\,\Bigl[-\nu,\,\tfrac{1}{2}\Bigr] \subset \Omega
\tag{3.4}\label{eq:304}
\end{equation*}
implies an upper bound on
\[
\mathcal{N}(R,v)=\#\{\lambda \text{ resonance} : \operatorname{Re}\lambda \in [R,R+1],\ \operatorname{Im}\lambda \ge -\nu\}.
\]

To prove Theorem~\ref{thm:1.1}, it suffices to prove the estimate
\[
\mathcal{N}(R,v) \le C\,R^{4\,(\nu-\beta_{BD})+\varepsilon} \qquad \text{as } R\to\infty,
\]
since combining this with \cite[Theorem 1]{Dya19b} proves the theorem, and we do this by estimating the number of zeros of $F$ in $\Omega'$ using a variant of Jensen's inequality. To carry this out, we will need an upper bound on $F(\omega)$ in the larger region $\Omega$, and we begin
by recalling the standard estimate \cite[(B.5.19)]{DZw19}:
\begin{equation*}
\log|F(\omega)| \le \bigl\|A(\omega)^{4}\bigr\|_{\operatorname{tr}(\mathcal{X})}, \qquad \omega\in\Omega .
\tag{3.5}\label{eq:305}
\end{equation*}

We estimate $\bigl\|A(\omega)^{4}\bigr\|_{\operatorname{tr}(\mathcal{X})}$ in two steps, the first of which is the following.

\begin{prop}\label{prop:3.1}
With $\rho,\rho'\in(0,1)$, $\varepsilon_{0}>0$, and $\nu_{0}>0$ fixed, the operator $A(\omega)$ satisfies
\begin{equation*}
\bigl\|A(\omega)^{4}\bigr\|_{\operatorname{tr}(\mathcal{X})}
\le C\,\|J(\omega)\|_{L^{2}\rightarrow L^{2}}^{4}\,
\operatorname{tr} \bigl((\tilde{A}^{*}\tilde{A})^{2}\bigr)
+ O \left(h^{\infty}\right), \qquad \omega\in\Omega,
\tag{3.6}\label{eq:306}
\end{equation*}
where $J(\omega)$ and $\tilde{A}$ are as in \eqref{eq:208} and \eqref{eq:209}, respectively.
\end{prop}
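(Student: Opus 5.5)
Write $A(\omega)=B+\varepsilon(\omega)$ with $B:=J(\omega)A_{-}\tilde A A_{+}$, and expand
\[
A(\omega)^4=B^4+E,
\]
where $E$ is the sum of the fifteen products of four factors in which at least one factor is $\varepsilon(\omega)$.

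\textbf{The error terms.} I would show that $\|E\|_{\operatorname{tr}(\mathcal X)}=O(h^\infty)$. The key input is \eqref{eq:208}: $\varepsilon(\omega)$ maps $H_h^{-N}\to H_h^{N}$ with norm $O(h^N)$ for every $N$.

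On the compact manifold $M_{\mathrm{ext}}$, the inclusion $H_h^{N}\hookrightarrow H_h^{-N}$ is trace class for $N$ large, with trace norm polynomially bounded in $h^{-1}$. Combining this with \eqref{eq:203} and $\mathcal X\subset H_h^s$, I would get $\|\varepsilon(\omega)\|_{\operatorname{tr}(\mathcal X)}=O(h^\infty)$.

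The remaining factors in each term are either $\varepsilon(\omega)$ or $B$. Each has $\mathcal X\to\mathcal X$ norm $O(h^{-C})$: for $\varepsilon(\omega)$ this is immediate, and for $B$ it follows from \eqref{eq:208} (the factor $J(\omega)$ smooths) together with the uniform boundedness of $A_\pm$ and $\tilde A$. The trace-class ideal property then gives the claim for every term containing $\varepsilon(\omega)$.

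\textbf{The main term.} Here I would regard $B$ as an operator on $\mathcal X$ and factor it through $L^2$:
\[
B^4=J A_-\tilde A\,(A_+JA_-)\,\tilde A\,(A_+JA_-)\,\tilde A\,(A_+JA_-)\,\tilde A A_+ .
\]
The plan is to put all trace norm onto the $\tilde A$ factors. By Hölder's inequality for Schatten norms,
\[
\|B^4\|_{\operatorname{tr}}\le \|J\|_{L^2\to\mathcal X}\,\|A_-\|\,\|A_+JA_-\|_{L^2\to L^2}^3\,\|A_+\|_{\mathcal X\to L^2}\,\|\tilde A\|_{S_4}^4 ,
\]
and $\|\tilde A\|_{S_4}^4=\operatorname{tr}\bigl((\tilde A^*\tilde A)^2\bigr)$, which is exactly the quantity in \eqref{eq:306}.

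\textbf{The obstacle.} The point needing care is the outer factors. One needs $\|J(\omega)\|_{L^2\to\mathcal X}$ and $\|J(\omega)\|_{\mathcal X\to L^2}$ to be comparable to $\|J(\omega)\|_{L^2\to L^2}$, not just bounded by the weaker $H^{-N}\to H^N$ norm. I expect to handle this as follows.
\begin{enumerate}
\item Use the structure of $J(\omega)$ from \cite{Dya19b}. There it is a microlocalized propagator, $J=X_1\,\mathcal J\,X_2$ with compactly microlocalized pseudodifferential cutoffs $X_1,X_2$, so $\|X_1\|_{L^2\to\mathcal X}$ and $\|X_2\|_{\mathcal X\to L^2}$ are $O(1)$.
\item Conjugate so that the cutoffs are absorbed into $A_\pm$. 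If necessary, modify $A_\pm$ by smoothing cutoffs, at an $O(h^\infty)$ cost; these can be included in the $O(h^\infty)$ error.
\end{enumerate}
This gives the bound with $C\|J(\omega)\|^4_{L^2\to L^2}$: one power of $\|J\|$ comes from each of the four $J$ factors, including the three middle factors $A_+JA_-$.
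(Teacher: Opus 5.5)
Your argument is correct, but it is organized differently from the paper's. The paper never expands $A(\omega)^4$. It first uses the singular-value inequality $\|A^4\|_{\operatorname{tr}}\le\sum_j s_j(A)^4$. It then removes $\varepsilon(\omega)$ at the level of $\sum_j s_j(\cdot)^4$, which needs only $\|\varepsilon(\omega)\|_{HS(\mathcal X)}=O(h^\infty)$. It finishes with the ideal bound $s_j(JA_-\tilde AA_+)\le\|JA_-\|\,\|A_+\|\,s_j(\tilde A)$.

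You instead split $A^4=B^4+E$. Killing the fifteen cross terms costs you a trace-class (not merely Hilbert--Schmidt) bound on $\varepsilon(\omega)$ in $\mathcal X$ and polynomial bounds on $\|B\|_{\mathcal X\to\mathcal X}$; you justify both correctly. You then apply Schatten--H\"older to $B^4$.

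The paper's route has one practical advantage. Its intermediate quantity $\sum_j s_j(A)^4$ also dominates $s_0(A)^4=\|A\|_{\mathcal X\to\mathcal X}^4$, and exactly this bound, \eqref{eq:307}, is reused in the proof of Theorem~\ref{thm:1.2}. Your route as written bounds only $\|A^4\|_{\operatorname{tr}}$, though H\"older applied to $B$ alone would give the $S_4$ bound just as easily. Your factorization has its own advantage: three of the four copies of $J$ sit inside $A_+JA_-$, so they are honestly measured in $L^2\to L^2$.

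On your ``obstacle'': the issue is real, and the paper passes over it silently. On $\mathcal X$, the step $s_j(ABC)\le\|A\|\,\|C\|\,s_j(B)$ actually produces $\|JA_-\|_{L^2\to\mathcal X}$, not $\|J\|_{L^2\to L^2}$.

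However, you do not need the internal microlocal structure of $J(\omega)$ from \cite{Dya19b}. That structure is not recorded here, and your step about absorbing cutoffs into $A_\pm$ is too vague to check. The simple fix is this: by \eqref{eq:203} and \eqref{eq:208} with $N\ge s+1$,
\[
\|J(\omega)\|_{L^2\to\mathcal X}\le C\,\|J(\omega)\|_{H_h^{-N}\to H_h^{N}}\le C\,h^{\rho'(h^{-1}\operatorname{Im}\omega-\varepsilon_0)} .
\]
This is precisely the bound the proofs of Theorems~\ref{thm:1.1} and~\ref{thm:1.2} use for ``$\|J(\omega)\|_{L^2\to L^2}$'', since they simply invoke \eqref{eq:208}. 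So the proposition should be read with that bound in place of the $L^2\to L^2$ norm. Passing through the $H_h^{-N}\to H_h^{N}$ norm loses nothing here.
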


\begin{proof}
We use \cite[Theorem 1.13 and Corollary 1.10]{Sim79} to write
\begin{equation*}
\bigl\|A(\omega)^{4}\bigr\|_{\operatorname{tr}(\mathcal{X})}
\le \sum_{j=0}^{\infty} s_{j}\bigl(A(\omega)\bigr)^{4},
\end{equation*}
where $s_{j}(A(\omega))$ are the singular values of $A(\omega)$ (note that $\sum_{j=0}^{\infty}s_{j}(B)^{4}<\infty$ whenever $B$ is Hilbert–Schmidt, since the ideal property of the Hilbert–Schmidt class gives
$\sum_{j=0}^{\infty}s_{j}(B)^{4}=\|B^{*}B\|_{HS}^{2}\le \|B\|^{2}\|B\|_{HS}^{2}$).
By the second part of \eqref{eq:208}, $\varepsilon(\omega)$ is Hilbert–Schmidt (hence so is $J(\omega)A_{-}\tilde{A}A_{+}$), and using \eqref{eq:203} it satisfies
\begin{equation*}
\|\varepsilon(\omega)\|_{HS(\mathcal{X})}
\le C\,\|\varepsilon(\omega)\|_{HS\bigl(H_{h}^{s}(M_{\mathrm{ext}})\to H_{h}^{s+1}(M_{\mathrm{ext}})\bigr)}
= O(h^{\infty}), \qquad \omega\in\Omega .
\end{equation*}
Thus, recalling \eqref{eq:207} and using \cite[Theorem~1.21]{Sim79}, we obtain
\begin{align*}
\sum_{j=0}^{\infty}s_{j}\bigl(A(\omega)\bigr)^{4}
&\le C \sum_{j=0}^{\infty}s_{j}\bigl(J(\omega)A_{-}\tilde{A}A_{+}\bigr)^{4} + O(h^{\infty}) \\
&\le C\,\|J(\omega)\|_{L^{2}\to L^{2}}^{4}\sum_{j=0}^{\infty}s_{j}(\tilde{A})^{4} + O(h^{\infty}),
\tag{3.7}\label{eq:307}
\end{align*}

\noindent where in the latter step we used that $s_{j}(A B C) \leq \|A\|\,\|C\|\, s_{j}(B)$ when $B$ is compact and $A$ and $C$ are bounded (see \cite[Proposition B.15]{DZw19}). Since $\sum_{j=0}^{\infty} s_{j}(\tilde{A})^{4}=\operatorname{tr} \left((\tilde{A}^{*}\tilde{A})^{2}\right)$, we conclude the proof.

\end{proof}

Thus, to bound $F$ from above, we need to bound $\operatorname{tr} \left((\tilde{A}^{*}\tilde{A})^{2}\right)$, and we will do this in part by reducing to the quantity estimated in Lemma \ref{lm:2.3}. This is done in the following:

\begin{prop}\label{prop:3.2}
There exists $C>0$ such that
\begin{equation*}
\operatorname{tr} \left((\tilde{A}^{*}\tilde{A})^{2}\right)
\leq C\, h^{\,2 \rho-4+2 \rho(1-\delta)+2 \rho^{\prime}(1-\delta)+\beta_{F} / 2}
\end{equation*}
for small $h$.
\end{prop}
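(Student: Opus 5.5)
The plan is to write $\operatorname{tr}\bigl((\tilde A^*\tilde A)^2\bigr)=\|\tilde A^*\tilde A\|_{HS}^2$ and to compute the Schwartz kernel of $\tilde A^*\tilde A$ explicitly. We use $\tilde A^*\tilde A$ rather than $\tilde A\tilde A^*$ deliberately. In $\tilde A^*\tilde A=\tilde\psi(hD_w)\psi_0\psi_+\tilde B_\psi^*\psi_-^2\tilde B_\psi\psi_+\psi_0\tilde\psi(hD_w)$ the \emph{middle} integration variable carries the cutoff $\psi_-^2$, supported in $\Lambda_\Gamma(C_1h^{\rho'})$. The \emph{outer} variables carry $\psi_+$, supported in $\Lambda_\Gamma(C_1h^\rho)$. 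This is exactly the configuration of $I(h)$ in \eqref{eq:216}--\eqref{eq:217}, with $\eta$ the middle variable and $x,w$ the outer ones.

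\textbf{Step 1: replace $\psi_-^2$ by $F_{c_2h^{\rho'}}$.} By \eqref{eq:211} and Lemma~\ref{lm:2.2}, transported to the disk model via $\sigma$ (whose Jacobian is smooth and bounded on the relevant compact set), we have $0\le\psi_-^2\le C\chi F_{c_2h^{\rho'}}$ as multiplication operators, for a suitable $c_2$ and a fixed cutoff $\chi$. Setting $S=\tilde A^*\tilde A$ and $T$ equal to the same sandwich with $\psi_-^2$ replaced by $C\chi F_{c_2h^{\rho'}}$, we get $0\le S\le T$. Hence $\operatorname{tr}S^2\le\operatorname{tr}T^2$, since $\operatorname{tr}S^2=\operatorname{tr}(S^{1/2}SS^{1/2})\le\operatorname{tr}(S^{1/2}TS^{1/2})=\operatorname{tr}(T^{1/2}ST^{1/2})\le\operatorname{tr}T^2$. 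Passing to the half-plane model, $|(y-y')/2|^{2iw/h}=\exp\bigl(\tfrac{iw}{h}\Phi(\cdot,\cdot)\bigr)$ up to smooth factors, which we absorb into amplitudes.

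\textbf{Step 2: write down the kernel.} The kernel of $T$ has the form
\[
h^{-1}\,k(w,w')\,\psi_+(x)\psi_+(x')\int F_{c_2h^{\rho'}}(\eta)\,e^{\frac{i}{h}(w\Phi(x,\eta)-w'\Phi(x',\eta))}\,G(x,\eta)\overline{G(x',\eta)}\,d\eta ,
\]
where $k(w,w')$ is the kernel of $\psi_0\tilde\psi(hD_w)^2\psi_0$. We have $|k(w,w')|\le C_Nh^{-1}\langle (w-w')/h\rangle^{-N}$, and $k$ is supported in $w,w'\in[1-C_1h^\rho,1+C_1h^\rho]$. Split the phase as
\[
\tfrac{w}{h}\bigl(\Phi(x,\eta)-\Phi(x',\eta)\bigr)+t\,\Phi(x',\eta),\qquad t=\tfrac{w-w'}{h}.
\]
The factor $e^{it\Phi(x',\eta)}$ goes into the amplitude; its $C^1$ norm in $\eta$ is $O(\langle t\rangle)$.

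\textbf{Step 3: bound the $\eta$-integral.} Now repeat the proof of Lemma~\ref{lm:2.3} with $h$ replaced by $h/w$ (harmless, since $w\approx1$ uniformly) and with this amplitude. By linearity in $g$, the bound \eqref{eq:213} holds with constant $C\|g\|_{C^1}$ once $\varphi$ is fixed in a bounded family. So the $|\cdot|^2$ of the $\eta$-integral, integrated over $x,x'\in\Lambda^{\mathbb R}_\Gamma(c_1h^\rho)$, is at most
\[
C\langle t\rangle^2h^{2\rho'(1-\delta)+2\rho(1-\delta)+\beta_F/2},
\]
uniformly in $w,w'$.

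\textbf{Step 4: collect powers of $h$.} The factor $\langle t\rangle^2$ is killed by $\langle t\rangle^{-2N}$ from $|k|^2$. We then bound $|k|^2\le Ch^{-2}$ and integrate $w,w'$ over a square of area $O(h^{2\rho})$. Together with the prefactor $h^{-2}$ from $(2\pi h)^{-1/2}$ appearing twice (squared), this gives $h^{-4+2\rho}$. Multiplying by the Step~3 bound yields exactly the claimed exponent $2\rho-4+2\rho(1-\delta)+2\rho'(1-\delta)+\beta_F/2$.

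\textbf{Main obstacle.} The hard step is the $w$-dependence of the phase. The two copies of $\tilde B_\psi$ are evaluated at different $w,w'$, and $(w-w')/h$ need not be small on the support of $\psi_0$. The plan handles this with the off-diagonal decay of $k$ together with the linear dependence of the Fourier decay constant on $\|g\|_{C^1}$. If the latter needed more care, the fallback is to cut at $|w-w'|\le h^{1-\epsilon}$, which is negligible to order $O(h^\infty)$ outside. Inside that region we would Taylor-expand the phase, losing at most $h^{-\epsilon}$, which Theorem~\ref{thm:1.1} tolerates.
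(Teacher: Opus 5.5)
\textbf{There is a genuine gap: your Step~2 kernel is not the kernel of $T$, and Steps~3--4 are built on it.} The operator $\tilde B_\psi$ acts fiberwise in $w$: its Schwartz kernel is $\delta(w-w')$ times a $y$-kernel depending on $w$. So it commutes with $\psi_0(w)$ but not with $\tilde\psi(hD_w)$.

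Write $T=\tilde\psi(hD_w)\psi_0 S_0\psi_0\tilde\psi(hD_w)$ with $S_0:=\psi_+\tilde B_\psi^*(C\chi F_{c_2h^{\rho'}})\tilde B_\psi\psi_+$. Then $S_0$ is fiberwise, both phases carry the \emph{same} $w$-value, and
\[
\ker T(w,y,w',y')=\int \kappa(w,\alpha)\,\psi_0(\alpha)^2\,S_0(\alpha;y,y')\,\kappa(\alpha,w')\,d\alpha,\qquad \kappa(w,\alpha)=(2\pi h)^{-1}\mathcal F(\tilde\psi)\bigl((\alpha-w)/h\bigr).
\]
This is the formula in the paper. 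What you wrote has $k(w,w')$ sitting between a phase at $w$ and a phase at $w'$. That is the kernel of a different operator, $\psi_+\tilde B_\psi^*(C\chi F_{c_2h^{\rho'}})\,Q\,\tilde B_\psi\psi_+$ with $Q:=\psi_0\tilde\psi(hD_w)^2\psi_0$. You have not related its trace-square to $\operatorname{tr}T^2$, and since $Q$ does not commute with $\tilde B_\psi$ there is no reason they agree.

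This has two consequences:
\begin{itemize}
\item For the true kernel the outer variables $w,w'$ range over all of $\mathbb R^+$, because $\kappa$ is Schwartz, not compactly supported. So confining $(w,w')$ to a square of area $O(h^{2\rho})$ in Step~4 is unjustified as written.
\item Your ``main obstacle'' (mismatched $w,w'$ in the two phases) does not exist. The $\langle t\rangle$-amplitude device of Step~3 treats an artifact. The fallback would also cost an $h^{-\epsilon}$ that the proposition as stated does not allow.
\end{itemize}

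\textbf{The repair is short, and your exponent count survives it.} There are two ways to do it.

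\emph{The paper's route.} Apply Cauchy--Schwarz in $\alpha$ over $\operatorname{supp}\psi_0$, which produces a factor $h^{\rho}$. Integrate out $w,w'$ using $\int|\mathcal F(\tilde\psi)((\alpha-w)/h)|^2\,dw\le Ch$. Then apply Lemma~\ref{lm:2.3} at each fixed $\alpha$, with $h$ replaced by $h/\alpha$.

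\emph{Cyclicity of the trace.} One has $\operatorname{tr}T^2=\operatorname{tr}(S_0QS_0Q)$; this is justified since $S_0\psi_0\tilde\psi(hD_w)$ is Hilbert--Schmidt. This gives
\[
\operatorname{tr}T^2=\iint |k(w,w')|^2\,\operatorname{tr}_{L^2(\mathbb S)}\bigl(S_0(w)S_0(w')\bigr)\,dw\,dw'\le \iint|k(w,w')|^2\,\|S_0(w)\|_{HS}\,\|S_0(w')\|_{HS}\,dw\,dw'.
\]
Now each $S_0(w)$ carries a single $w$ in both of its phases. Lemma~\ref{lm:2.3} gives $\|S_0(w)\|_{HS}^2\le Ch^{-2+2\rho'(1-\delta)+2\rho(1-\delta)+\beta_F/2}$ for $w\in\operatorname{supp}\psi_0$. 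Also $k$ is supported in $\operatorname{supp}\psi_0\times\operatorname{supp}\psi_0$ with $|k|\le Ch^{-1}$, so $\iint|k|^2\le Ch^{2\rho-2}$. This is exactly your Step~4 bookkeeping, with no mixed phases.

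Your Step~1, passing from $0\le S\le T$ to $\operatorname{tr}S^2\le\operatorname{tr}T^2$, is a correct alternative to the paper's singular-value comparison $s_j(\tilde A)\le C s_j(\hat A)$.
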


\begin{proof}
    
 Let $C_{2}>0$ be such that \eqref{eq:211} holds with $\alpha=h^{\rho'}$, where $C_{1}$ is as in \eqref{eq:210}. Next, we define
\[
\tilde{F}(y;h):=F_{C_{2} h^{\rho'}} \left(\sigma^{-1}(y)\right)
\]
with $F_{h}$ defined in \eqref{eq:214}. Using \eqref{eq:215}, we may write
\begin{equation*}\tag{3.8}\label{eq:308}
s_{j}(\tilde{A}) \leq C\, s_{j}(\hat{A})
\end{equation*}
where
\begin{equation*}\tag{3.9}\label{eq:309}
\hat{A}:=\sqrt{\tilde{F}(y;h)}\, \tilde{B}_{\psi}\, \psi_{+}(y;h)\, \psi_{0}(w;h)\, \tilde{\psi} \left(h D_{w}\right)
\end{equation*}
(this is just $\tilde{A}$ with $\psi_{-}$ replaced by $\sqrt{\tilde{F}}$; to prove \eqref{eq:308} we used that for $A$ bounded and $B$ compact, $s_{j}(A B) \leq \|A\|\, s_{j}(B)$, and the estimate $\left|\psi_{-}\right|=\left|\psi_{-}\right|(C_{\Gamma}\tilde{F})^{-1/2}(C_{\Gamma}\tilde{F})^{1/2} \le C\tilde{F}^{1/2}$ by Lemma \ref{lm:2.2}). Thus,
\begin{equation*}\tag{3.10}\label{eq:310}
\operatorname{tr} \left((\tilde{A}^{*}\tilde{A})^{2}\right) \leq C\, \operatorname{tr} \left((\hat{A}^{*}\hat{A})^{2}\right),
\end{equation*}

\noindent so we must estimate

\begin{equation*}\tag{3.11}\label{eq:311}
\operatorname{tr} \left((\hat{A}^{*}\hat{A})^{2}\right)
= \int_{\mathbb{R}^{+}} \int_{\mathbb{S}} \int_{\mathbb{R}^{+}} \int_{\mathbb{S}}
\left|\operatorname{ker}(\hat{A}^{*}\hat{A})(w,y,w',y')\right|^{2}\,dw\,dy\,dw'\,dy'.
\end{equation*}

By \eqref{eq:309}, we calculate
\begin{equation*}
\begin{aligned}
\operatorname{ker}(\hat{A}^{*}\hat{A})(w,y,w',y')
&= (2\pi h)^{-3}\,\psi_{+}(y;h)\,\psi_{+}(y';h)
\int_{\mathbb{R}^{+}} \mathcal{F}(\tilde{\psi}) \left(\frac{w'-\alpha}{h}\right)
\,\mathcal{F}(\tilde{\psi})\left(\frac{\alpha-w}{h}\right)\,\psi_{0}^{2}(\alpha;h) \\
&\qquad\cdot\Bigg(\int_{\mathbb{S}} \tilde{F}(z;h)\,
\left|\frac{z-y'}{z-y}\right|^{2 i \alpha / h}\,
\psi(z,y)\,\psi(z,y')\,dz\Bigg)\,d\alpha,
\end{aligned}
\end{equation*}
where \(\mathcal{F}(f)(\xi):=\int e^{-i\xi x} f(x)\,dx\).
Since \(\operatorname{supp}\psi_{0}\subset \bigl[\,1-C_{1}h^{\rho},\,1+C_{1}h^{\rho}\,\bigr]\), we use Schwarz inequality to estimate
\begin{equation*}\tag{3.12}\label{eq:312}
\begin{aligned}
\bigl|\operatorname{ker}(\hat{A}^{*}\hat{A})(w,y,w',y')\bigr|^{2}
&\le C\,h^{\rho-6}\,\psi_{+}(y;h)\,\psi_{+}(y';h)
\int_{\mathbb{R}}
\Bigl|\mathcal{F}(\tilde{\psi}) \left(\frac{w'-\alpha}{h}\right)\Bigr|^{2}
\Bigl|\mathcal{F}(\tilde{\psi}) \left(\frac{\alpha-w}{h}\right)\Bigr|^{2} \\
&\qquad\cdot \psi_{0}^{4}(\alpha;h)\,
\bigl|K(\alpha,y,y';h)\bigr|^{2}\,d\alpha,
\end{aligned}
\end{equation*}
where we defined
\[
K(\alpha,y,y';h):=\int_{\mathbb{S}} \tilde{F}(z;h)\,
\left|\frac{z-y'}{z-y}\right|^{2 i \alpha / h}\,
\psi(z,y)\,\psi(z,y')\,dz.
\]

\noindent Integrating \eqref{eq:312} and using that $\int_{\mathbb{R}}\bigl|\mathcal{F}(\tilde{\psi})\bigl((\alpha-w)/h\bigr)\bigr|^{2}\,dw \le h\,\|\tilde{\psi}\|_{L^{2}}^{2}$ and $\int_{\mathbb{R}}\bigl|\mathcal{F}(\tilde{\psi})\bigl((w'-\alpha)/h\bigr)\bigr|^{2}\,dw' \le h\,\|\tilde{\psi}\|_{L^{2}}^{2}$, we obtain from \eqref{eq:311} that (recalling the support condition on $\psi_{+}$ stated above),

\begin{equation*}\tag{3.13}\label{eq:313}
\operatorname{tr}\left((\hat{A}^{*}\hat{A})^2 \right) \; \le \; Ch^{\rho-4} \int_{\mathbb{R}^+} \psi_0^4(\alpha; h) \left( \iint_{\Lambda_\Gamma(C_1h^{\rho})^2} \abs{K(\alpha, y, y'; h)}^2 dydy' \right)d\alpha.
\end{equation*}

Using (\ref{eq:211}), we have 
\begin{equation*}\tag{3.14}\label{eq:314}
\iint_{\Lambda_{\Gamma}(C_{1}h^{\rho})^{2}}\bigl|K(\alpha,y,y';h)\bigr|^{2}\,dy\,dy'
\;\le\;
\iint_{\sigma\bigl(\Lambda_{\Gamma}^{\mathbb{R}}(C_{2}h^{\rho})\bigr)^{2}}
\bigl|K(\alpha,y,y';h)\bigr|^{2}\,dy\,dy'.
\end{equation*}

\noindent Changing variables using $\sigma$, noting that
$$
\bigl|\sigma(x,0)-\sigma(\eta,0)\bigr|^{2}
= \frac{4\,|x-\eta|^{2}}{\bigl(1+x^{2}\bigr)\bigl(1+\eta^{2}\bigr)},
$$
we easily estimate the right-hand side of \eqref{eq:314} by $I(h/\alpha)$ (see \eqref{eq:216}) for appropriate constants and with $G$ replaced by $\psi$. Applying Lemma \ref{lm:2.3} and then using \eqref{eq:310}, \eqref{eq:313}, we conclude the proof.

\end{proof}

\begin{proof}[Proof of Theorem~\ref{thm:1.1}]
Fix $\nu>0$, $\varepsilon_{0}>0$, set $\nu_{0}=\nu+\varepsilon_{0}$, $\rho=\rho'=1-\varepsilon_{0}$, and define $\Omega$, $\Omega'$ as in \eqref{eq:302}, \eqref{eq:304}. By \eqref{eq:305}, the first bound in \eqref{eq:208} (with $N=0$), and Propositions ~\ref{prop:3.1} and ~\ref{prop:3.2}, we have
\begin{equation*}\tag{3.15}\label{eq:315}
\log |F(\omega)| \le \|A(\omega)^{4}\|_{\operatorname{tr}(\mathcal{X})}
\le h^{-4\,(\nu-\beta_{BD})+O(\varepsilon_{0})},\qquad \omega\in\Omega.
\end{equation*}

Next, we need a lower bound on $F$ at a point of $\Omega'$. If $\varepsilon_{0}$ is sufficiently small, then similarly to \cite[(4.7)]{Dya19b}, we may use \eqref{eq:303} to see that \begin{equation*}\tag{3.16}\label{eq:316}
\bigl\|A(\omega_{0})^{4}\bigr\|_{\mathcal{X}\to\mathcal{X}} \le \tfrac{1}{2},
\qquad \omega_{0}:=1+\tfrac{i h}{3}\in\Omega'.
\end{equation*}

\noindent Moreover,
\[
\bigl(I-A(\omega_{0})^{4}\bigr)^{-1}
= I + A(\omega_{0})^{4}\bigl(I-A(\omega_{0})^{4}\bigr)^{-1},
\]
and, using \eqref{eq:316} and the second inequality in \eqref{eq:315}, we get
\[
\bigl\|A(\omega_{0})^{4}\bigl(I-A(\omega_{0})^{4}\bigr)^{-1}\bigr\|_{\operatorname{tr}(\mathcal{X})}
\le C\,h^{-4\,(\nu-\beta_{BD})+O(\varepsilon_{0})}.
\]

\noindent Thus,
\[
-\log\bigl|F(\omega_{0})\bigr|
= \log\Bigl|\operatorname{det}\!\Bigl(I + A(\omega_{0})^{4}\bigl(I-A(\omega_{0})^{4}\bigr)^{-1}\Bigr)\Bigr|
\le C\,h^{-4\,(\nu-\beta_{BD})+O(\varepsilon_{0})},
\]
providing our lower bound.

As in \cite[proof of Theorem 1]{Dya19b}, we conclude by applying the Jensen inequality as outlined in \cite[proof of Theorem 2]{DD13} to see that the number of zeros of $F$ in $\Omega'$ is bounded by
$C\,h^{-4\,(\nu-\beta_{BD})+O(\varepsilon_{0})}$. As explained in the beginning of this section, this implies the bound
\[
\mathcal N(R,\nu)\le C\,R^{4\,(\nu-\beta_{BD})+O(\varepsilon_{0})},
\]
and taking $\varepsilon_{0}$ sufficiently small concludes the proof.

\end{proof}
\subsection{Improved resolvent bound in the improved spectral gap.} With almost all of the work already done, the proof of Theorem~\ref{thm:1.2} is now simple.

\begin{proof}[Proof of Theorem~\ref{thm:1.2}]
As in \cite[proof of Theorem 2]{Dya19b}, it suffices to prove the bound
\begin{equation*}
\bigl\|P_{h}(\omega)^{-1}\bigr\|_{\mathcal{Y}\to\mathcal{X}}
\le C\,h^{-1-c(\nu,\delta)-\varepsilon},
\qquad
\omega \in \Omega := [1-2h,1+2h] + ih[-\nu,1].
\end{equation*}

\noindent (Note that this $\Omega$ is different than the one in \eqref{eq:302}.) Indeed, by \eqref{eq:204}, this implies an $H_{h}^{s-1}(M)\to H_{h}^{s}(M)$ estimate on $\psi R_{h}(\omega)\psi$ for any $\psi\in C_{c}^{\infty}(M)$, and using an elliptic parametrix for $-h^{2}\Delta_{M}-\tfrac{h^{2}}{4}-\omega^{2}$ (see, for instance, \cite[Proposition E.32]{DZw19}), we convert this to an $L^{2}\to L^{2}$ estimate, which implies \eqref{eq:102} after recalling \eqref{eq:201}.

Fix $\varepsilon_0 > 0$ and set

$$
\rho=1-\varepsilon_{0},\qquad
\rho'=\frac{2\bigl(1-\delta-2\beta_{BD}\bigr)+\sqrt{\varepsilon_{0}}}{2\bigl(1-\delta-2\nu\bigr)} .
$$

\noindent Note that $\rho'<1$ for sufficiently small $\varepsilon_{0}$ (depending on $\nu$). Since $s_{0}(A)=\|A\|$ for any compact operator, we use \eqref{eq:307} to see that
$$
\|A(\omega)\|_{\mathcal{X}\to\mathcal{X}}^{4}
\le C\,\|J(\omega)\|_{L^{2}\to L^{2}}^{4}\,\operatorname{tr}\!\bigl((\tilde{A}^{*}\tilde{A})^{2}\bigr)
+O(h^{\infty}),\qquad \omega\in\Omega.
$$

\noindent By \eqref{eq:208}, Proposition ~\ref{prop:3.2}, and our choices of $\rho,\rho'$ above, this gives
$$
\|A(\omega)\|_{\mathcal{X}\to\mathcal{X}}^{4}
\le C\,h^{\sqrt{\varepsilon_{0}}+O(\varepsilon_{0})}
\le \tfrac{1}{2},\qquad \omega\in\Omega,
$$
for small enough $\varepsilon_{0}$ and small $h$. Then we also have
$$
\|(I-A(\omega))^{-1}\|_{\mathcal{X}\to\mathcal{X}} \le C,\qquad \omega\in\Omega,
$$
for small $h$. Since $P_{h}(\omega)^{-1}=(I-A(\omega))^{-1}Z(\omega):\mathcal{Y}\to\mathcal{X}$ (see \eqref{eq:205}), and
$$
\|Z(\omega)\|_{\mathcal{Y}\to\mathcal{X}}
\le C\,h^{-1-(\rho+\rho')(\nu+\varepsilon_{0})},\qquad \omega\in\Omega
,
$$
(see \eqref{eq:206}) we obtain
$$
\|P_{h}(\omega)^{-1}\|_{\mathcal{Y}\to\mathcal{X}}
\le C\,h^{-1-c(\nu,\delta)+O(\sqrt{\varepsilon_{0}})},\qquad \omega\in\Omega.
$$

\noindent Since $\varepsilon_{0}$ is arbitrary, the proof is complete.
\end{proof}

\section{PROOF OF THEOREM 1.3}

Here we shall apply an argument analogous to that in Section 3.1 but in the context of quantum open baker's maps, to prove Theorem~\ref{thm:1.3}. Throughout this section, for any $N\in\mathbb{N}$, we consider the abelian group
$$
\mathbb{Z}_{N}=\mathbb{Z}/N\mathbb{Z}\simeq\{0,1,\ldots,N-1\},
$$
and define the Hilbert space $\ell_{N}^{2}$ as the space of functions $u:\mathbb{Z}_{N}\to\mathbb{C}$ with norm
$$
\|u\|_{\ell_{N}^{2}}^{2}=\sum_{j=0}^{N-1}|u(j)|^{2}.
$$

We recall from the introduction that a quantum open baker's map, $B_{N}$, is determined by a triple $(M,\mathcal A,\chi)$, where $M\in\mathbb{N}$ is the base, $\mathcal A\subset \mathbb{Z}_{M} = \{0,1,\ldots,M-1\}$ is the alphabet, and $\chi\in C_{c}^{\infty}((0,1);[0,1])$ is a cutoff function. Then for $N=M^{k}$, $k\in\mathbb{N}$,
$
B_{N}:\ell_{N}^{2}\to \ell_{N}^{2}
$
is the sequence of operators given by

\begin{equation*}\tag{4.1}\label{eq:401}
B_{N}:=\mathcal{F}_{N}^{*}
\begin{pmatrix}
\chi_{N/M}\,\mathcal{F}_{N/M}\,\chi_{N/M} & & & \\
& \ddots & & \\
& & \chi_{N/M}\,\mathcal{F}_{N/M}\,\chi_{N/M}
\end{pmatrix}
\, I_{\mathcal A,M}.
\end{equation*}

\noindent where $\mathcal{F}_{N}:\ell_{N}^{2}\to \ell_{N}^{2}$ is the unitary Fourier transform,
$$
(\mathcal{F}_{N}u)(j)=\frac{1}{\sqrt{N}}\sum_{l=0}^{N-1}\exp\!\left(-\frac{2\pi i\,jl}{N}\right)u(l),
$$
we let $\chi_{N/M}\in \ell_{N/M}^{2}$ be the discretization,

$$
\chi_{N/M}(j)=\chi\!\left(\frac{Mj}{N}\right), \qquad j\in\left\{0,1,\ldots,\frac{N}{M}-1\right\},
$$
(thus, in \eqref{eq:401}, $\chi_{N/M}$ is treated as a multiplication operator on $\ell_{N/M}^{2}$), and $I_{\mathcal A,M}$ is the diagonal matrix whose $j$th entry is $1$ if $\lfloor \frac{Mj}{N}\rfloor \in \mathcal A$, and $0$ otherwise. Notice that $ \operatorname{Sp}(B_{N})\subset\{\,|\lambda|\le 1\,\}$
since $\|B_{N}\|_{\ell_{N}^{2}\to \ell_{N}^{2}}\le 1$, as follows directly from \eqref{eq:401}.

We next recall that
\begin{equation*}\tag{4.2}\label{eq:402}
\delta:=\frac{\log |\mathcal A|}{\log M}\in(0,1)
\end{equation*}
is the dimension of the limiting Cantor set
$$
C_{\infty}:=\bigcap_{k}\;\bigcup_{j\in C_{k}}\left[\frac{j}{M^{k}},\,\frac{j+1}{M^{k}}\right],
$$
where

\begin{equation*}\tag{4.3}\label{eq:403}
C_{k}=C_{k}(M,\mathcal A):=\left\{\sum_{j=0}^{k-1} a_{j} M^{j}\;:\; a_{0},\ldots,a_{k-1}\in \mathcal A\right\}\subset \mathbb{Z}_N.
\end{equation*}

\noindent The Cantor set $C_{\infty}$ plays the role of the limit set $\Lambda_{\Gamma}$ in this context.

As mentioned above, the proof of Theorem~\ref{thm:1.3} follows the approach of Section 3.1 (see also \cite[Section 4.2]{DJ17}), and we begin with an approximate inverse identity for the operator $B_{N}-\lambda$ which is analogous to \eqref{eq:205}.

Fix $\nu_{0}>0$ and set
\begin{equation*}\tag{4.4}\label{eq:404}
\Omega:=\left\{\,M^{-\nu_{0}}<|\lambda|<3\,\right\}\subset\mathbb{C}.
\end{equation*}

\noindent Let $\rho\in(0,1)$ and define
\begin{equation*}\tag{4.5}\label{eq:405}
X=X_{\rho}:=\bigcup\left\{\,C_{k}+m \;:\; m\in\mathbb{Z},\ |m|\le 2\,N^{1-\rho}\,\right\}\subset \mathbb{Z}_{N},
\end{equation*}
where addition is carried out in the group $\mathbb{Z}_{N}$.

Dyatlov--Jin, \cite[Lemma~4.1]{DJ17}, prove the following approximate inverse identity:
\begin{equation*}\tag{4.6}\label{eq:406}
I \;=\; Z(\lambda)\,\bigl(B_{N}-\lambda\bigr)\;+\;B(\lambda), \qquad \lambda\in\Omega,
\end{equation*}
for families of operators on $\ell_{N}^{2}$, holomorphic in $\Omega$, satisfying
\begin{equation*}
\|Z(\lambda)\|_{\ell_{N}^{2}\to \ell_{N}^{2}} \;\le\; C\,N^{2\rho \nu_{0}}, \qquad \lambda\in\Omega,
\end{equation*}
and
\begin{equation*}\tag{4.7}\label{eq:407}
B(\lambda) \;:=\; J(\lambda)\, \mathbf{1}_{X}\,\mathcal{F}_{N}^{*}\,\mathbf{1}_{X}\,\mathcal{F}_{N}\;+\;\varepsilon(\lambda), \qquad \lambda\in\Omega,
\end{equation*}

\noindent where (here $\bar{k}:=\lceil \rho k\rceil$)

\begin{equation*}\tag{4.8}\label{eq:408}
\|J(\lambda)\|_{\ell_{N}^{2}\to \ell_{N}^{2}} \le |\lambda|^{-\bar{k}}, 
\qquad 
\|\varepsilon(\lambda)\|_{\ell_{N}^{2}\to \ell_{N}^{2}} = O\!\left(N^{-\infty}\right),
\qquad \lambda\in\Omega.
\end{equation*}

Analogously to \eqref{eq:301}, we now define
\begin{equation*}
F(\lambda):=\det\!\bigl(I-B(\lambda)^{4}\bigr), \qquad \lambda\in\Omega.
\end{equation*}
Using \eqref{eq:406} it is easy to see 
\begin{equation*}\
F(\lambda)
=\det\!\bigl(I+B(\lambda)+B(\lambda)^{2}+B(\lambda)^{3}\bigr)\;
\det Z(\lambda)\;
\det\!\bigl(B_{N}-\lambda\bigr), \qquad \lambda\in\Omega.
\end{equation*}

\noindent so that 
\begin{equation*}
\operatorname{Sp}(B_{N})\cap\Omega \;\subset\; \{\lambda\in\Omega : F(\lambda)=0\},
\end{equation*}
both sets repeating elements with multiplicities. Therefore, to prove Theorem~\ref{thm:1.3} it suffices to give the same bound on the number of zeros of $F$. We will again apply a Jensen inequality, and to do so we need to bound $F$ from above in $\Omega$, and from below at some point.
To get an upper bound, we apply \cite[(B.5.19)]{DZw19},
\begin{equation} \tag{4.9}\label{eq:409}
\log |F(\lambda)| \leq \bigl\|B(\lambda)^{4}\bigr\|_{\operatorname{tr}}
\end{equation}
and then, in exactly the same way as in the proof of Proposition \ref{prop:3.1}, we obtain
\begin{equation} \tag{4.10}\label{eq:410}
\bigl\|B(\lambda)^{4}\bigr\|_{\operatorname{tr}}
\leq \sum_{j=0}^{N-1} s_{j}\bigl(B(\lambda)\bigr)^{4}
\leq C\,\|J(\lambda)\|_{\ell_{N}^{2} \rightarrow \ell_{N}^{2}}^{4}\,
\operatorname{tr}\!\bigl((T^{*} T)^{2}\bigr),
\end{equation}
where
\begin{equation} \tag{4.11}\label{eq:411}
T:=\mathbf{1}_{X}\,\mathcal{F}_{N}\,\mathbf{1}_{X}.
\end{equation}

\noindent We have

\begin{equation}\tag{4.12}\label{eq:412}
\operatorname{tr}\!\bigl((T^{*} T)^{2}\bigr)
=\frac{1}{N}\sum_{j,\ell=0}^{N-1} \mathbf{1}_{X}(j)\,\mathbf{1}_{X}(\ell)\,
\bigl|\mathcal{F}_{N}(\mathbf{1}_{X})(\ell-j)\bigr|^{2},
\end{equation}

\noindent which can be seen by summing the diagonal entries of the matrix $(T^{*} T)^{2}$. It is interesting to compare this quantity to its analogue, \eqref{eq:216}, in the setting of hyperbolic surfaces.

Next we reduce $\operatorname{tr}((T^{*} T)^{2})$ to a quantity which is easier to estimate:

\begin{lm}\label{lm:4.1}
With $C_{k}$ as defined in \eqref{eq:403}, set
\begin{equation*}
t_{k}:=\frac{1}{N} \sum_{j, \ell=0}^{N-1} \textbf{1}_{C_{k}}(j)\, \textbf{1}_{C_{k}}(\ell)\,\left|\mathcal F_{N}\!\left(\textbf{1}_{C_{k}}\right)(\ell-j)\right|^{2}. \tag{4.13}\label{eq:4.13}
\end{equation*}
Then with $T$ as in \eqref{eq:411}, we have
\begin{equation*}
\operatorname{tr}\left(\left(T^{*} T\right)^{2}\right) \leq 16 N^{4(1-\rho)} t_{k}.
\end{equation*}
\end{lm}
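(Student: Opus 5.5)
The plan is to write $\operatorname{tr}((T^*T)^2)$ as the square of a Hilbert–Schmidt norm of a positive operator. Then I replace each copy of $\mathbf 1_X$ by the sum of indicators of the translated Cantor sets $C_k+m$, using operator monotonicity rather than pointwise bounds on $\mathcal F_N(\mathbf 1_X)$. Pointwise bounds are useless here because $|\mathcal F_N(\mathbf 1_X)|$ is not monotone in $X$.

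Write $C:=C_k$, let $L$ be the number of integers $m$ with $|m|\le 2N^{1-\rho}$, and set $\mathbf 1_{C+m}$ for the indicator of the translate. Pointwise $\mathbf 1_X\le\sum_m \mathbf 1_{C+m}$, so as multiplication operators $0\le \mathbf 1_X\le \sum_m \mathbf 1_{C+m}$.

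\textbf{Step 1.} Note that $\operatorname{tr}((T^*T)^2)=\|A\|_{HS}^2$ with $A:=T^*T=\mathbf 1_X\mathcal F_N^*\mathbf 1_X\mathcal F_N\mathbf 1_X\ge 0$. Conjugating the inequality on the middle factor by $\mathcal F_N\mathbf 1_X$ gives
\[
0\le A\le \sum_{m'}\mathbf 1_X\mathcal F_N^*\mathbf 1_{C+m'}\mathcal F_N\mathbf 1_X .
\]
For positive operators $0\le A\le B$ one has $\|A\|_{HS}\le\|B\|_{HS}$, since $\operatorname{tr}(A^2)\le\operatorname{tr}(A^{1/2}BA^{1/2})\le\operatorname{tr}(B^2)$. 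The triangle inequality then yields $\|A\|_{HS}\le\sum_{m'}\|D_{m'}^*D_{m'}\|_{HS}$, where $D_{m'}:=\mathbf 1_{C+m'}\mathcal F_N\mathbf 1_X$.

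\textbf{Step 2.} The operators $D^*D$ and $DD^*$ have the same nonzero singular values, so $\|D_{m'}^*D_{m'}\|_{HS}=\|\mathbf 1_{C+m'}\mathcal F_N\mathbf 1_X\mathcal F_N^*\mathbf 1_{C+m'}\|_{HS}$. I repeat the monotonicity argument on the inner $\mathbf 1_X$ to get
\[
\|D_{m'}^*D_{m'}\|_{HS}\le\sum_m\|\mathbf 1_{C+m'}\mathcal F_N\mathbf 1_{C+m}\mathcal F_N^*\mathbf 1_{C+m'}\|_{HS}.
\]

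\textbf{Step 3.} Each term on the right is unitarily equivalent to the case $m=m'=0$. Let $\tau_m$ be translation by $m$ and $e_m$ multiplication by the character $e^{-2\pi i m(\cdot)/N}$. Then $\tau_m^{-1}\mathbf 1_{C+m}\tau_m=\mathbf 1_C$ and $\mathcal F_N\tau_m=e_m\mathcal F_N$. Moreover $e_m$ is a unitary multiplication operator commuting with every indicator, so the characters cancel between $\mathcal F_N$ and $\mathcal F_N^*$ on both sides. Hence the $HS$ norm equals $\|\mathbf 1_C\mathcal F_N\mathbf 1_C\mathcal F_N^*\mathbf 1_C\|_{HS}=t_k^{1/2}$, where the last identity is \eqref{eq:412} with $X$ replaced by $C_k$ (compare \eqref{eq:4.13}).

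\textbf{Step 4.} Combining the steps gives $\|A\|_{HS}\le L^2t_k^{1/2}$, and therefore $\operatorname{tr}((T^*T)^2)\le L^4t_k$.

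The only real obstacle is conceptual rather than computational. Applying the Schatten-$4$ triangle inequality directly to $T=\sum_{m,m'}\mathbf 1_{C+m}\mathcal F_N\mathbf 1_{C+m'}$ loses $L^8$, so the positivity/monotonicity trick on the middle factors is essential. What remains is bookkeeping of the constant, comparing $L^4$ with $16N^{4(1-\rho)}$; this depends on the counting convention for the shifts in \eqref{eq:405}. With the literal count $L\le 4N^{1-\rho}+1$ the argument gives a constant $C\,N^{4(1-\rho)}$ with $C$ absolute, which is all that is used afterwards. If the sharp constant $16$ is wanted, I would first discard redundant shifts, which are $N^{-\rho}$-spaced relative to the Cantor blocks, before counting.
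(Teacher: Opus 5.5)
Your proof is correct, but it is organized differently from the paper's.

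\textbf{The paper's route.} The paper works entirely with the explicit formula \eqref{eq:412}. There the two outer copies of $\mathbf{1}_X$ enter only as nonnegative weights against $|\mathcal F_N(\mathbf{1}_X)(\ell-j)|^2$, so the paper bounds them \emph{pointwise} by $\sum_r \mathbf{1}_{C_k+r}$ and reindexes. It then expands $|\mathcal F_N(\mathbf{1}_X)(\ell-j+s-r)|^2$ and resums over $j,\ell$. This produces the same expression with $\mathbf{1}_X$ now outside and $\mathbf{1}_{C_k}$ inside, multiplied by phases $e^{2\pi i(s-r)(m-n)/N}$ that are simply bounded by $1$. It then repeats the procedure.

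So pointwise bounds are not useless; they are applied to the outer factors. The resummation plays the role of your identity $\|D^*D\|_{HS}=\|DD^*\|_{HS}$, written in coordinates with the shifts carried along as phases.

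\textbf{Your route.} You apply L\"owner-order monotonicity to the middle factor and remove the shifts exactly, using the translation and modulation covariance of $\mathcal F_N$. The only loss is then the Hilbert--Schmidt triangle inequality, and no phases ever appear. The paper's version, by contrast, needs nothing beyond expanding finite sums. Both arguments lose $L^2$ per stage on $\operatorname{tr}((T^*T)^2)$ and end at $L^4 t_k$.

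One small point in Step~3: to remove the outer shift $m'$ you need the dual relation $\tau_{m'}^{-1}\mathcal F_N=\mathcal F_N e_{m'}$ (translation on the frequency side is modulation on the position side), not just $\mathcal F_N\tau_m=e_m\mathcal F_N$. It is equally elementary. Alternatively, swap $D'D'^*$ to $D'^*D'$ once more and treat $m'$ as an inner shift for $\mathcal F_N^*$.

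\textbf{The constant.} The paper's argument has the same feature as yours. The number of pairs $(r,s)$ with $|r|,|s|\le 2N^{1-\rho}$ is $(2\lfloor 2N^{1-\rho}\rfloor+1)^2\approx 16N^{2(1-\rho)}$, not the $4N^{2(1-\rho)}$ written there. So the paper's proof also really yields $L^4 t_k$, not literally $16N^{4(1-\rho)}t_k$. Your hedge is therefore justified, and the vague ``discard redundant shifts'' step is unnecessary. Only a bound $O(N^{4(1-\rho)})\,t_k$ is used in the proof of Theorem~\ref{thm:1.3}, where it is absorbed into $N^{O(\varepsilon_0)}$.
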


\begin{proof}
Using $0\le \textbf{1}_{X}\le \sum_{|r|\le 2N^{1-\rho}} \textbf{1}_{C_{k}+r}$, we obtain from \eqref{eq:412} and \eqref{eq:405}
\begin{align*}
\operatorname{tr}\!\bigl((T^{*}T)^{2}\bigr)
&\le \sum_{|r|,|s|\le 2N^{1-\rho}} \frac{1}{N}
   \sum_{j,\ell=0}^{N-1}
   \textbf{1}_{C_{k}+r}(j)\,\textbf{1}_{C_{k}+s}(\ell)\,
   \bigl|\mathcal{F}_{N}\!\bigl(\textbf{1}_{X}\bigr)(\ell-j)\bigr|^{2} \\[0.25em]
&=   \sum_{|r|,|s|\le 2N^{1-\rho}} \frac{1}{N}
     \sum_{j,\ell=0}^{N-1}
     \textbf{1}_{C_{k}}(j)\,\textbf{1}_{C_{k}}(\ell)\,
     \bigl|\mathcal{F}_{N}\!\bigl(\textbf{1}_{X}\bigr)(\ell-j+s-r)\bigr|^{2}.
\tag{4.14}\label{eq:4.14}
\end{align*}

Now,
\[
\begin{aligned}
\bigl|\mathcal{F}_{N}\!\bigl(\textbf{1}_{X}\bigr)(\ell-j+s-r)\bigr|^{2}
&= \frac{1}{N}\sum_{n,m=0}^{N-1}
    \exp\!\Bigl(-\tfrac{2\pi i(\ell-j+s-r)n}{N}\Bigr)\,\textbf{1}_{X}(n)\,
    \exp\!\Bigl(\tfrac{2\pi i(\ell-j+s-r)m}{N}\Bigr)\,\textbf{1}_{X}(m) \\[0.15em]
&= \frac{1}{N}\sum_{n,m=0}^{N-1}
    \exp\!\Bigl(\tfrac{2\pi i(s-r)(m-n)}{N}\Bigr)\,
    \exp\!\Bigl(\tfrac{2\pi i(m-n)\ell}{N}\Bigr)\,
    \exp\!\Bigl(-\tfrac{2\pi i(m-n)j}{N}\Bigr)\,
    \textbf{1}_{X}(n)\,\textbf{1}_{X}(m).
\end{aligned}
\]

Rearranging the sums, using \eqref{eq:4.14}, and repeating the same estimate yields
\[
\begin{aligned}
\operatorname{tr}\!\bigl((T^{*}T)^{2}\bigr)
&\le \sum_{|r|,|s|\le 2N^{1-\rho}} \frac{1}{N}
     \sum_{n,m=0}^{N-1}
     \exp\!\Bigl(\tfrac{2\pi i(s-r)(m-n)}{N}\Bigr)\,\textbf{1}_{X}(n)\,\textbf{1}_{X}(m)\,
     \bigl|\mathcal{F}_{N}\!\bigl(\textbf{1}_{C_{k}}\bigr)(m-n)\bigr|^{2} \\[0.15em]
&\le 4\,N^{2(1-\rho)}\cdot \frac{1}{N}
     \sum_{n,m=0}^{N-1} \textbf{1}_{X}(n)\,\textbf{1}_{X}(m)\,
     \bigl|\mathcal{F}_{N}\!\bigl(\textbf{1}_{C_{k}}\bigr)(m-n)\bigr|^{2} \\[0.15em]
&\le 4\,N^{2(1-\rho)} \sum_{|t|,|s|\le 2N^{1-\rho}} \frac{1}{N}
     \sum_{n,m=0}^{N-1} \textbf{1}_{C_{k}+t}(n)\,\textbf{1}_{C_{k}+s}(m)\,
     \bigl|\mathcal{F}_{N}\!\bigl(\textbf{1}_{C_{k}}\bigr)(m-n)\bigr|^{2} \\[0.15em]
&\le 4\,N^{2(1-\rho)} \sum_{|t|,|s|\le 2N^{1-\rho}} \frac{1}{N}
     \sum_{n,m=0}^{N-1} \textbf{1}_{C_{k}}(n)\,\textbf{1}_{C_{k}}(m)\,
     \bigl|\mathcal{F}_{N}\!\bigl(\textbf{1}_{C_{k}}\bigr)(m-n+s-t)\bigr|^{2}.
\end{aligned}
\]

\[
\begin{aligned}
&= 4\,N^{2(1-\rho)}
   \sum_{|t|,|s|\le 2N^{1-\rho}}
   \frac{1}{N}\sum_{j,\ell=0}^{N-1}
   \exp\!\Bigl(\tfrac{2\pi i\, (s-t)\,(\ell-j)}{N}\Bigr)\,
   \textbf{1}_{C_{k}}(j)\,\textbf{1}_{C_{k}}(\ell)\,
   \bigl|\mathcal{F}_{N}\!\bigl(\textbf{1}_{C_{k}}\bigr)(\ell-j)\bigr|^{2} \\[0.15em]
&\le 16\,N^{4(1-\rho)}\,t_{k}.
\end{aligned}
\]
\end{proof}

We will prove two upper bounds on $t_{k}$, Propositions ~\ref{prop:4.3} and ~\ref{prop:4.4}, corresponding to the improved spectral gaps beating $\frac{1}{2}-\delta$ (\cite[Theorem 1]{DJ17}) and in terms of additive energy (\cite[§3.4]{DJ17}), respectively. In \cite{DJ17}, spectral gaps are proved via the fractal uncertainty principle, which in this context is an estimate on the quantity
$r_{k}:=\left\|\,\textbf{1}_{C_{k}}\;\mathcal{F}_{N}\;\textbf{1}_{C_{k}}\,\right\|_{\ell_{N}^{2} \rightarrow \ell_{N}^{2}}$ (see also \cite[§4.1]{Dya19a}). A crucial initial observation in \cite[§3.1]{DJ17} is that $r_{k}$ is submultiplicative; likewise, we will show next that $t_{k}$ is submultiplicative.

We first make an observation that will be used in the proof. Let $k=k_{1}+k_{2}$ and $N_{i}=M^{k_{i}}$ for $i=1,2$. If $\ell \in\{0,\ldots,N-1\}$, we can write $\ell=N_{2}a+e$ for some unique $a \in\{0,\ldots,N_{1}-1\}$ and $e \in\{0,\ldots,N_{2}-1\}$. Moreover, since $C_{k}=N_{2}C_{k_{1}}+C_{k_{2}}$, we have $\textbf{1}_{C_{k}}(\ell)=\textbf{1}_{C_{k_{1}}}(a)\,\textbf{1}_{C_{k_{2}}}(e)$. Thus, for any $f=\{f_{\ell}\}_{\ell=0}^{N-1}$, we have
\begin{equation*}
\sum_{\ell=0}^{N-1} \textbf{1}_{C_{k}}(\ell)\, f_{\ell}
=\sum_{a=0}^{N_{1}-1} \sum_{e=0}^{N_{2}-1} \textbf{1}_{C_{k_{1}}}(a)\,\textbf{1}_{C_{k_{2}}}(e)\, f_{N_{2} a+e}.
\tag{4.15}\label{eq:4.15}
\end{equation*}

We now prove the submultiplicative property of $t_{k}$:

\begin{lm} \label{lm:4.2}
    For $k_1, k_2 \in \N$, we have: 

    $$t_{k_1 + k_2} \le t_{k_1}t_{k_2}$$
\end{lm}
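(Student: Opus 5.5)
The plan is to expand $t_k$ as a fourfold exponential sum over $C_k$, split each index into base-$M$ digit blocks using \eqref{eq:4.15}, and show that after a suitable splitting the sum factors except for a single twiddle phase. That phase can then be discarded by a triangle inequality applied at exactly one stage.

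\textbf{Step 1 (rewrite $t_k$).} Expanding $|\mathcal F_N(\mathbf 1_{C_k})(m)|^2=\frac1N\sum_{n,n'\in C_k}e^{-2\pi i m(n-n')/N}$ in \eqref{eq:4.13} gives
\[
t_k=\frac{1}{N^2}\sum_{j,\ell,n,n'\in C_k}\exp\Bigl(-\tfrac{2\pi i(\ell-j)(n-n')}{N}\Bigr).
\]
With $G_k(m):=\sum_{a\in C_k}e^{-2\pi i a m/N}$ this can also be written as $t_k=N^{-2}\sum_{n,n'\in C_k}|G_k(n-n')|^2$. Consequently $t_k\ge 0$, and $N^2t_k=\sum_{n,n'\in C_k}|G_k(n-n')|^2$.

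\textbf{Step 2 (mixed digit splitting).} Let $k=k_1+k_2$ and $N_i=M^{k_i}$. Decompose the indices in two different ways:
\[
j=N_2a+e,\qquad n=N_1b+f,
\]
with $a\in C_{k_1}$, $e\in C_{k_2}$, $b\in C_{k_2}$, $f\in C_{k_1}$. The second decomposition is legitimate because $C_k=N_1C_{k_2}+C_{k_1}$ holds as well as $C_k=N_2C_{k_1}+C_{k_2}$. Do the same for $\ell=N_2a'+e'$ and $n'=N_1b'+f'$. Writing $\Delta$ for differences, the phase becomes
\[
\frac{(N_2\Delta a+\Delta e)(N_1\Delta b+\Delta f)}{N}=\Delta a\Delta b+\frac{\Delta a\Delta f}{N_1}+\frac{\Delta e\Delta b}{N_2}+\frac{\Delta e\Delta f}{N}.
\]
The first term is an integer and drops out. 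This leaves two ``pure'' phases, one for each scale, and the twiddle phase $\Delta e\Delta f/N$.

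\textbf{Step 3 (factor and estimate).} First sum over $a,a'$, which gives $|G_{k_1}(f-f')|^2$, and over $b,b'$, which gives $|G_{k_2}(e-e')|^2$. This yields
\[
t_k=\frac1{N^2}\sum_{e,e'\in C_{k_2}}|G_{k_2}(e-e')|^2\,S(e,e'),\qquad S(e,e'):=\sum_{f,f'\in C_{k_1}}e^{-2\pi i(e-e')(f-f')/N}|G_{k_1}(f-f')|^2.
\]
The outer weights $|G_{k_2}(e-e')|^2$ are nonnegative, so the twiddle factor can be killed by bounding $|S(e,e')|\le N_1^2t_{k_1}$ with the triangle inequality. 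Since $t_k\ge 0$, this gives
\[
t_k\le \frac{N_1^2 t_{k_1}}{N^2}\sum_{e,e'}|G_{k_2}(e-e')|^2=\frac{N_1^2N_2^2}{N^2}\,t_{k_1}t_{k_2}=t_{k_1}t_{k_2}.
\]

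The main obstacle is Step 2. If both $j$ and $n$ are split the same way, the leading cross term $N_2\Delta a\Delta b/N_1$ is not an integer and the sum does not factor. Choosing the high/low digit blocks oppositely for the two pairs of variables, as in the submultiplicativity argument for $r_k$ in \cite{DJ17}, reduces the obstruction to the single twiddle phase. Positivity of $|G_{k_2}|^2$ is what then allows that phase to be discarded with no loss.
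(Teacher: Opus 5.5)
Your argument is correct and is essentially the paper's proof. The paper likewise splits one pair of indices as $N_2a+e$ and the other pair as $N_1g+c$ (opposite high/low digit blocks), sums out the two pure-scale phases to produce $|\mathcal{F}_{N_1}(\mathbf{1}_{C_{k_1}})(c-d)|^2\,|\mathcal{F}_{N_2}(\mathbf{1}_{C_{k_2}})(e-f)|^2$, and discards the single twiddle factor $\exp(2\pi i(e-f)(c-d)/N)$ by the triangle inequality, using nonnegativity of the remaining weights. The only difference is cosmetic: you drop the phase in two stages (first in $S(e,e')$, then in the outer sum), while the paper drops it in one step over all four remaining variables.
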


\begin{proof}
We may write
\begin{equation}\tag{4.16}\label{eq:4.16}
t_{k}
=\frac{1}{N^{2}}\sum_{j,\ell,m,n=0}^{N-1}
\exp\!\Bigl(\tfrac{2\pi i\,(\ell-j)(m-n)}{N}\Bigr)\,
\textbf{1}_{C_{k}}(j)\,\textbf{1}_{C_{k}}(\ell)\,
\textbf{1}_{C_{k}}(n)\,\textbf{1}_{C_{k}}(m).
\end{equation}

\noindent We now change variables as discussed prior to the proof, writing
$$
\begin{aligned}
&\ell=N_{2}a+e,\qquad &&j=N_{2}b+f,\qquad &&a,b,c,d\in C_{k_{1}},\\
&m=N_{1}g+c,\qquad &&n=N_{1}h+d,\qquad &&e,f,g,h\in C_{k_{2}}.
\end{aligned}
$$

\noindent Noting that the exponential term becomes
$$
\exp\!\Bigl(\tfrac{2\pi i\,(\ell-j)(m-n)}{N}\Bigr)
=\exp\!\Bigl(\tfrac{2\pi i\,(a-b)(c-d)}{N_{1}}\Bigr)\,
 \exp\!\Bigl(\tfrac{2\pi i\,(e-f)(g-h)}{N_{2}}\Bigr)\,
 \exp\!\Bigl(\tfrac{2\pi i\,(e-f)(c-d)}{N}\Bigr),
$$
and then using \eqref{eq:4.15} (repeatedly), we compute
$$
\begin{aligned}
t_{k}
&=\frac{1}{N^{2}}
  \sum_{a,b,c,d=0}^{N_{1}-1}\;
  \sum_{e,f,g,h=0}^{N_{2}-1}
  \exp\!\Bigl(\tfrac{2\pi i\,(a-b)(c-d)}{N_{1}}\Bigr)\,
  \exp\!\Bigl(\tfrac{2\pi i\,(e-f)(g-h)}{N_{2}}\Bigr)\,
  \exp\!\Bigl(\tfrac{2\pi i\,(e-f)(c-d)}{N}\Bigr)\\
&\qquad\cdot
  \textbf{1}_{C_{k_{1}}}(a)\,\textbf{1}_{C_{k_{1}}}(b)\,\textbf{1}_{C_{k_{1}}}(c)\,\textbf{1}_{C_{k_{1}}}(d)\,
  \textbf{1}_{C_{k_{2}}}(e)\,\textbf{1}_{C_{k_{2}}}(f)\,\textbf{1}_{C_{k_{2}}}(g)\,\textbf{1}_{C_{k_{2}}}(h).
\end{aligned}
$$
But $$
\bigl|\mathcal{F}_{N_{1}}\!\bigl(\textbf{1}_{C_{k_{1}}}\bigr)(c-d)\bigr|^{2}
=\frac{1}{N_{1}}\sum_{a,b=0}^{N_{1}-1}
\exp\!\Bigl(\tfrac{2\pi i\,(a-b)(c-d)}{N_{1}}\Bigr)\,
\textbf{1}_{C_{k_{1}}}(a)\,\textbf{1}_{C_{k_{1}}}(b),
$$
and
$$
\bigl|\mathcal{F}_{N_{2}}\!\bigl(\textbf{1}_{C_{k_{2}}}\bigr)(e-f)\bigr|^{2}
=\frac{1}{N_{2}}\sum_{g,h=0}^{N_{2}-1}
\exp\!\Bigl(\tfrac{2\pi i\,(e-f)(g-h)}{N_{2}}\Bigr)\,
\textbf{1}_{C_{k_{2}}}(g)\,\textbf{1}_{C_{k_{2}}}(h),
$$
so we have, 
$$
\begin{aligned}
t_{k}
&= \frac{1}{N}
   \sum_{c,d=0}^{N_{1}-1} \sum_{e,f=0}^{N_{2}-1}
   \exp\!\Bigl(\tfrac{2\pi i\,(e-f)(c-d)}{N}\Bigr)\,
   \bigl|\mathcal{F}_{N_{1}}\!\bigl(\textbf{1}_{C_{k_{1}}}\bigr)(c-d)\bigr|^{2}\,
   \bigl|\mathcal{F}_{N_{2}}\!\bigl(\textbf{1}_{C_{k_{2}}}\bigr)(e-f)\bigr|^{2} \\
&\qquad\cdot
   \textbf{1}_{C_{k_{1}}}(c)\,\textbf{1}_{C_{k_{1}}}(d)\,
   \textbf{1}_{C_{k_{2}}}(e)\,\textbf{1}_{C_{k_{2}}}(f) \\[0.25em]
&\le \frac{1}{N}
   \sum_{c,d=0}^{N_{1}-1} \sum_{e,f=0}^{N_{2}-1}
   \bigl|\mathcal{F}_{N_{1}}\!\bigl(\textbf{1}_{C_{k_{1}}}\bigr)(c-d)\bigr|^{2}\,
   \bigl|\mathcal{F}_{N_{2}}\!\bigl(\textbf{1}_{C_{k_{2}}}\bigr)(e-f)\bigr|^{2}\,
   \textbf{1}_{C_{k_{1}}}(c)\,\textbf{1}_{C_{k_{1}}}(d)\,
   \textbf{1}_{C_{k_{2}}}(e)\,\textbf{1}_{C_{k_{2}}}(f) \\[0.25em]
&= t_{k_{1}}\,t_{k_{2}}.
\end{aligned}
$$
\end{proof}

We now use Lemma~\ref{lm:4.2} to prove the following estimate on $t_{k}$, which will allow us to prove the portion of our fractal Weyl bound which matches the spectral gap beating $\frac{1}{2}-\delta$:

\begin{prop}\label{prop:4.3}
    
 There exists $\beta>\frac{1}{2}-\delta$ such that for any $\varepsilon>0$, there exists $C_{\varepsilon}>0$ so that
\begin{equation}\tag{4.17}\label{eq:4.17}
t_{k} \leqslant C_{\varepsilon} N^{-4 \beta+\varepsilon}.
\end{equation}

\end{prop}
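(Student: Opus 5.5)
The plan is to combine the submultiplicativity of Lemma~\ref{lm:4.2} with a trivial bound that becomes strict at a single fixed scale, in the spirit of \cite[§3.1]{DJ17}.

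\emph{Trivial bound.} Write $N=M^k$ and $|C_k|=|\mathcal A|^k=N^{\delta}$. From the definition of $\mathcal F_N$,
\[
\bigl|\mathcal F_N(\mathbf 1_{C_k})(m)\bigr|^2\le \frac{|C_k|^2}{N}\quad\text{for every } m .
\]
Inserting this into \eqref{eq:4.13} gives
\[
t_k\le \frac{|C_k|^4}{N^{2}}=N^{4\delta-2}=N^{-4(\frac12-\delta)} .
\]
So the content of the proposition is a strict gain over this bound.

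\emph{Strict inequality at $k=2$.} Set $N_0=M^2$. Equality in the trivial bound at scale $N_0$ would require
\[
\bigl|\mathcal F_{N_0}(\mathbf 1_{C_2})(\ell-j)\bigr|^2=\frac{|C_2|^2}{N_0}\quad\text{for all } j,\ell\in C_2 .
\]
Equivalently, for every $j,\ell\in C_2$, the phase $n\mapsto e^{2\pi i(\ell-j)n/N_0}$ would have to be constant on $C_2$.

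Since $\delta>0$, we have $|\mathcal A|\ge 2$, so we can pick $a\ne b$ in $\mathcal A$. Take $\ell=a$, $j=b$ (both in $C_2$, with top digit $0$). Take $n=a$, $n'=b$, also in $C_2$. Then the two phases agree only if
\[
\frac{(a-b)^2}{M^2}\in\mathbb Z,
\]
which is impossible because $0<|a-b|<M$. Hence the term with $\ell-j=a-b$ is strictly smaller than the trivial bound, and all other terms are at most the trivial bound. This yields
\[
t_2<\frac{|C_2|^4}{N_0^{2}} .
\]
Define $\beta$ by $t_2=N_0^{-4\beta}$. Then $\beta>\tfrac12-\delta$, and $\beta$ depends only on $(M,\mathcal A)$. (If $t_2=0$, any $\beta$ works.)

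\emph{Submultiplicativity.} For general $k$, write $k=2q+r$ with $r\in\{0,1\}$. Lemma~\ref{lm:4.2} gives
\[
t_k\le t_2^{\,q}\,t_r\le C\,N_0^{-4\beta q}\le C'\,N^{-4\beta},
\]
where $t_0$ is interpreted as $1$ (or the case $r=0$ is handled directly), and $t_1\le M^{4\delta-2}$ is a constant. This proves \eqref{eq:4.17}, even without the $\varepsilon$-loss.

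The only step needing care is the strictness argument at $k=2$. At $k=1$ it can fail, for example when $M=4$ and $a-b=2$, since then $(a-b)^2/M\in\mathbb Z$. That is why the argument passes to two digits, where the $M^2$ in the denominator rules this out.
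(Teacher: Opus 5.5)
Your argument is essentially the paper's: submultiplicativity (Lemma~\ref{lm:4.2}) plus strictness of the trivial bound $t_k\le N^{4\delta-2}$ at $k=2$, using two elements of $C_2$ that differ by less than $M$. The only difference is that the paper defines $\beta$ through Fekete's lemma as the limiting exponent, which is where its $\varepsilon$-loss comes from. You instead fix $\beta$ from $t_2$, which gives a clean bound with a possibly smaller $\beta$ and is equally sufficient for the statement.

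One small correction: $a,b\in\mathcal A$ lie in $C_2$ only if $0\in\mathcal A$. Take instead $\ell=a+cM$, $j=b+cM$ (and similarly for $n,n'$) with any $c\in\mathcal A$. This leaves all the differences unchanged, so the rest of your argument goes through as written.
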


\begin{proof}
    
Since $\log t_{k}$ is subadditive by Lemma~\ref{lm:4.2}, Feket\'{e}'s Lemma shows that there exists $\beta$ for which
\begin{equation}\tag{4.18}\label{eq:4.18}
4 \beta=-\lim_{k \to \infty} \frac{\log t_{k}}{k \log M}=-\inf_{k} \frac{\log t_{k}}{k \log M}.
\end{equation}

\noindent This implies \eqref{eq:4.17} and it remains to show that $\beta>\frac{1}{2}-\delta$; by \eqref{eq:4.18}, it suffices to prove that $t_{k}<N^{4 \delta-2}$ for \textit{some} $k$, and we will follow \cite[Lemma~4.8]{Dya19a} (see also \cite[Lemma~3.4]{DJ17}) to show that this in fact holds for all $k \geq 2$.

Indeed, we first note that by \eqref{eq:4.16}, we easily obtain
\begin{equation}\tag{4.19}\label{eq:4.19}
t_{k} \leqslant \frac{\left|C_{k}\right|^4}{N^{2}}=N^{4\left(\delta-\frac{1}{2}\right)},
\end{equation}

\noindent since $\left|C_{k}\right|=|\mathcal A|^{k}=N^{\delta}$ by \eqref{eq:403}, \eqref{eq:402}. From \eqref{eq:4.16} again, we see that equality can hold in \eqref{eq:4.19} only if the exponential factor is $\equiv 1$ for all $j,\ell,m,n \in C_{k}$, that is, only if $(\ell-j)(m-n) \in N \mathbb{Z}$ for all $j,\ell,m,n \in C_{k}$. But when $k \geq 2$, we can find $\ell,j \in C_{k}$ such that
$$
0<|\ell-j|<M \leqslant \sqrt{N}
$$
(using here that $\delta>0$), so $(\ell-j)^{2} \notin N \mathbb{Z}$ and \eqref{eq:4.19} must be a strict inequality whenever $k \geq 2$.
\end{proof}

Next we prove another bound on $t_{k}$, this one in terms of an additive energy estimate given by \cite[§3.4]{DJ17}. To state it, recall that the additive energy of a set $X \subset \mathbb{Z}_{N}$ is defined by
\[
E_{\mathcal A}(X):=\left|\left\{(a,b,c,d)\in X^{4}: a+b\equiv c+d \pmod N\right\}\right|.
\]

In \cite[Corollary 3.9 and Lemma 3.10]{DJ17}, Dyatlov–Jin prove that there exists $\gamma_{\mathcal A}>0$ such that for any $\varepsilon>0$, we have for some $C_{\varepsilon}>0$ and all $k$,
\begin{equation}\tag{4.20}\label{eq:4.20}
E_{\mathcal A}\!\left(C_{k}\right) \leq C_{\varepsilon}\, N^{3\delta - \gamma_{\mathcal A} + \varepsilon}.
\end{equation}

\noindent Moreover, $\gamma_{\mathcal A}$ can be explicitly computed. By first proving a fractal uncertainty principle, this estimate is used in \cite{DJ17} to prove a spectral
gap of size $$
\beta_{E}:=\frac{3}{4}\left(\frac{1}{2}-\delta\right)+\frac{\gamma_{\mathcal A}}{8}.
$$

We now prove our second bound on $t_{k}$ :

\begin{prop}\label{prop:4.4}
For any $\varepsilon>0$, there exists $C_{\varepsilon}>0$ such that
\begin{equation*}
t_{k} \leq C_{\varepsilon}\, N^{-4 \beta_{E}+\varepsilon}.
\end{equation*}
\end{prop}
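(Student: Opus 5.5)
The plan is to bound $t_k$ directly by the additive energy $E_{\mathcal A}(C_k)$ using Cauchy–Schwarz and Plancherel on $\mathbb{Z}_N$, and then to invoke \eqref{eq:4.20}. Submultiplicativity (Lemma~\ref{lm:4.2}) is not needed here. Write $f:=\mathbf{1}_{C_k}$ and define the representation function
\[
r(\xi):=\#\{(j,\ell)\in C_k^2:\ \ell-j\equiv \xi \pmod N\},\qquad \xi\in\mathbb{Z}_N .
\]
Grouping the terms of \eqref{eq:4.13} by the value of $\ell-j$ gives
\[
t_k=\frac1N\sum_{\xi\in\mathbb{Z}_N} r(\xi)\,\bigl|\mathcal F_N f(\xi)\bigr|^2 .
\]
Applying Cauchy–Schwarz in $\xi$ then yields
\[
t_k\le \frac1N\Bigl(\sum_{\xi} r(\xi)^2\Bigr)^{1/2}\Bigl(\sum_{\xi}\bigl|\mathcal F_N f(\xi)\bigr|^4\Bigr)^{1/2}.
\]

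Both factors are expressed through the additive energy. For the first factor, $\sum_\xi r(\xi)^2$ counts quadruples $(j,\ell,j',\ell')\in C_k^4$ with $\ell-j\equiv\ell'-j'$. This is the same as $\ell+j'\equiv\ell'+j$, so $\sum_\xi r(\xi)^2=E_{\mathcal A}(C_k)$. For the second factor, expanding the square gives
\[
|\mathcal F_N f(\xi)|^2=\frac1N\sum_{\eta} r(\eta)\,e^{-2\pi i \xi\eta/N},
\]
so $|\mathcal F_N f|^2=N^{-1/2}\mathcal F_N r$. Plancherel for the unitary $\mathcal F_N$ then gives
\[
\sum_\xi|\mathcal F_N f(\xi)|^4=N^{-1}\sum_\eta r(\eta)^2=N^{-1}E_{\mathcal A}(C_k).
\]
Combining the two factors,
\[
t_k\le N^{-1}\,E_{\mathcal A}(C_k)^{1/2}\,N^{-1/2}E_{\mathcal A}(C_k)^{1/2}=N^{-3/2}E_{\mathcal A}(C_k).
\]

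Inserting \eqref{eq:4.20} gives $t_k\le C_\varepsilon N^{3\delta-\frac32-\gamma_{\mathcal A}+\varepsilon}$. Since $4\beta_E=\tfrac32-3\delta+\tfrac{\gamma_{\mathcal A}}{2}$, this is at most $C_\varepsilon N^{-4\beta_E+\varepsilon}$ because $\gamma_{\mathcal A}>0$; in fact it is stronger by $N^{-\gamma_{\mathcal A}/2}$. As a sanity check, the trivial bound $E_{\mathcal A}(C_k)\le |C_k|^3=N^{3\delta}$ recovers \eqref{eq:4.19}.

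The only step needing care is the identification of the two sums with $E_{\mathcal A}(C_k)$ modulo $N$, including the normalization constants of $\mathcal F_N$. I expect no substantial obstacle there; it is bookkeeping.
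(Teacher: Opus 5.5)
Your proof is correct, and it takes a genuinely different and sharper route than the paper. The paper applies the Schwarz inequality separately for each fixed $j\in C_k$, obtaining $\sum_{\ell\in C_k}|\mathcal F_N(\mathbf 1_{C_k})(\ell-j)|^2\le |C_k|^{1/2}\|\mathcal F_N(\mathbf 1_{C_k})\|_{\ell_N^4}^2$. It then sums over $j\in C_k$ trivially, which gives $t_k\le N^{-3/2}|C_k|^{3/2}E_{\mathcal A}(C_k)^{1/2}$. With \eqref{eq:4.20} this lands exactly on $N^{-4\beta_E+\varepsilon}$, reproducing the mechanism behind the Dyatlov--Jin gap $\beta_E$.

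By grouping according to $\xi=\ell-j$ first, you also exploit the additive structure of the pair $(j,\ell)$. The paper's factor $|C_k|^{3/2}$ is precisely the trivial bound for your $\|r\|_{\ell_N^2}=E_{\mathcal A}(C_k)^{1/2}$. In fact $t_k=N^{-3/2}\langle \mathcal F_N r, r\rangle$ exactly, so your estimate is just unitarity of $\mathcal F_N$. Consequently $t_k\le N^{-3/2}E_{\mathcal A}(C_k)$ always dominates the paper's inequality, and improves it by the factor $(E_{\mathcal A}(C_k)/|C_k|^3)^{1/2}\le C_\varepsilon N^{-\gamma_{\mathcal A}/2+\varepsilon}$.

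Fed into the proof of Theorem~\ref{thm:1.3}, your bound would give the exponent $4(\nu-\beta_E')$ with $\beta_E'=\tfrac34(\tfrac12-\delta)+\tfrac{\gamma_{\mathcal A}}{4}$. Since $\|\mathbf 1_{C_k}\mathcal F_N\mathbf 1_{C_k}\|^4\le t_k$, it would likewise give $r_k\le C_\varepsilon N^{-\beta_E'+\varepsilon}$, beyond the quoted $N^{-\beta_E}$. The paper's route has the merit of following the Dyatlov--Jin argument verbatim, so its Weyl exponent matches the previously known gap exactly. It gains nothing over yours, though: even for mixed sums $\frac1N\sum_{j,\ell\in Y}|\mathcal F_N(\mathbf 1_X)(\ell-j)|^2$, your version with $E_{\mathcal A}(Y)\le |Y|^3$ recovers the paper's.

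One slip, which does not affect the proof: your closing sanity check is wrong. Inserting $E_{\mathcal A}(C_k)\le |C_k|^3$ gives $t_k\le N^{3\delta-3/2}$, whereas \eqref{eq:4.19} is $t_k\le N^{4\delta-2}$. These agree only at $\delta=\tfrac12$, and for $\delta<\tfrac12$ yours is the weaker of the two. In fact $N^{3\delta-3/2}$ is the geometric mean of \eqref{eq:4.19} and the Plancherel bound $t_k\le N^{-1}|C_k|^2$.
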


\begin{proof}
Schwarz inequality gives
\begin{equation}\tag{4.21}\label{eq:4.21}
\sum_{\ell=0}^{N-1} \textbf{1}_{C_{k}}(\ell)\,\bigl|\mathcal{F}_{N}\bigl(\textbf{1}_{C_{k}}\bigr)(\ell-j)\bigr|^{2}
\ \leq\ \lvert C_{k}\rvert^{1/2}\,\Bigl\|\mathcal{F}_{N}\bigl(\textbf{1}_{C_{k}}\bigr)\Bigr\|_{\ell_{N}^{4}}^{2}.
\end{equation}

\noindent But we compute
\begin{equation*}
\Bigl\|\mathcal{F}_{N}\bigl(\textbf{1}_{C_{k}}\bigr)\Bigr\|_{\ell_{N}^{4}}^{4}
= \frac{1}{N^{2}}\sum_{\ell=0}^{N-1}\ \sum_{a,b,c,d\in C_{k}}
\exp\!\left(\frac{2\pi i\,(a+b-c-d)\,\ell}{N}\right)
= \frac{E_{\mathcal A}\!\left(C_{k}\right)}{N}.
\end{equation*}
Thus from \eqref{eq:4.21} and the definition of $t_{k}$ in \eqref{eq:4.13}, we have

$$
t_{k} \leqslant N^{-3/2}\,\lvert C_{k}\rvert^{3/2}\, E_{\mathcal A}\!\left(C_{k}\right)^{1/2}.
$$

\noindent Since $\lvert C_{k}\rvert = N^{\delta}$, the claim follows from \eqref{eq:4.20}.
\end{proof}
We are now ready to begin the proof of Theorem~\ref{thm:1.3}.

\begin{proof}[Proof of Theorem~\ref{thm:1.3}]
Fix $\nu >0$, $\varepsilon_{0}>0$, set $\nu_{0}=\nu+\varepsilon_{0}$, $\rho=1-\varepsilon_{0}$, and define $\Omega$ as in \eqref{eq:404}. Using \eqref{eq:410}, the first bound in \eqref{eq:408}, and Lemma~\ref{lm:4.1}, we obtain
$$
\bigl\|B(\lambda)^{4}\bigr\|_{\operatorname{tr}} \leq C\, N^{4\nu + O(\varepsilon_{0})}\, t_{k}, \qquad \lambda \in \Omega .
$$
We now use \eqref{eq:409} and Proposition~\ref{prop:4.3} with $\varepsilon_{0}$ to get
\begin{equation}\tag{4.22}\label{eq:4.22}
\log\lvert F(\lambda)\rvert \ \le\ \bigl\|B(\lambda)^{4}\bigr\|_{\operatorname{tr}}
\ \le\ C\,N^{4(\nu-\beta)+O(\varepsilon_{0})}, \qquad \lambda\in\Omega .
\end{equation}

Next we use \eqref{eq:407} and \eqref{eq:408} to see that
\begin{equation}\tag{4.23}\label{eq:423}
\bigl\|B(2)^{4}\bigr\|_{\ell_{N}^{2}\to \ell_{N}^{2}} \ \le\ \tfrac{1}{2}
\end{equation}
for large enough $k$. Moreover,
\[
\bigl(I-B(2)^{4}\bigr)^{-1} \ =\ I\ +\ B(2)^{4}\bigl(I-B(2)^{4}\bigr)^{-1},
\]
and using \eqref{eq:423} and the second inequality in \eqref{eq:4.22}, we have
\[
\bigl\|B(2)^{4}\bigl(I-B(2)^{4}\bigr)^{-1}\bigr\|_{\operatorname{tr}}
\ \le\ C\,N^{4(\nu-\beta)+O(\varepsilon_{0})}.
\]

\noindent Thus,
\begin{equation}\tag{4.24}\label{eq:4.24}
-\log \lvert F(2)\rvert
\ =\ \log \Bigl\lvert \det\!\Bigl(I + B(2)^{4}\bigl(I-B(2)^{4}\bigr)^{-1}\Bigr)\Bigr\rvert
\ \le\ C\,N^{4(\nu-\beta)+O(\varepsilon_{0})}.
\end{equation}

Applying Jensen’s inequality as in \cite[Lemma~4.4]{DJ17} with \eqref{eq:4.22} and \eqref{eq:4.24}, we obtain the bound
\[
\mathcal N_{k}(\nu)\ \le\ C\,N^{4(\nu-\beta)+O(\varepsilon_{0})}
\qquad \text{as } k\to\infty .
\]

A similar proof using Proposition~\ref{prop:4.4} gives this estimate with $\beta$ replaced by $\beta_{E}$. Pairing these two bounds with \cite[Theorem~3]{DJ17} proves Theorem~\ref{thm:1.3}
\end{proof}

\bibliographystyle{alpha}

\nocite{Bor16}
\nocite{BD17}
\nocite{BD18}
\nocite{BGS11}
\nocite{DD13}
\nocite{Dya19a}
\nocite{Dya19b}
\nocite{DJ17}
\nocite{DW16}
\nocite{DZ16}
\nocite{DZw18}
\nocite{DZw19}
\nocite{GLZ04}
\nocite{MM87}
\nocite{Nau05}
\nocite{Nau14}
\nocite{Pat76}
\nocite{Sim79}
\nocite{Soa23}
\nocite{Sul79}
\nocite{Tao24}
\nocite{Vac24}
\nocite{Vac23}
\nocite{Vas13a}
\nocite{Vas13b}
\nocite{Zwo16}
\nocite{Zwo17}

\bibliography{references}

\noindent \emph{Email address}: travisdcunningham@gmail.com

\end{document}